\newtheorem{thm}{Theorem}[section]
 \newtheorem{prop}[thm]{Proposition}
 \theoremstyle{definition}
 \newtheorem{df}[thm]{Definition}
 \theoremstyle{remark}
 \newtheorem{rem}[thm]{Remark}
 \numberwithin{equation}{section}
\def\be#1 {\begin{equation} \label{#1}}
\newcommand{\ee}{\end{equation}}
\def\sqw{\hbox{\rlap{\leavevmode\raise.3ex\hbox{$\sqcap$}}$%
\sqcup$}}
\def\findem{\ifmmode\sqw\else{\ifhmode\unskip\fi\nobreak\hfil
\penalty50\hskip1em\null\nobreak\hfil\sqw
\parfillskip=0pt\finalhyphendemerits=0\endgraf}\fi}
\newcommand{\mb}{\medskip\noindent}
\newcommand{\gb}{\bigskip\noindent}
\newcommand{\R}{\mathbf R}
\newcommand{\N}{\mathbb N}
\newcommand{\Z}{\mathbb Z}
\newcommand{\s}{\mathcal S}
\newcommand{\cs}{\mathcal S}
\newcommand{\csp}{\mathcal S'}
\newcommand{\rn}{{\mathbf R}^n}
\newcommand{\rtwon}{{\mathbf R}^{2n}}
\newcommand{\bsexotic}{BS^0_{1,1;\pi/4}}
\begin{document}

\title
[Bilinear pseudodifferential operators]
{\bf  Sobolev space estimates for a class of bilinear 
pseudodifferential operators lacking symbolic calculus}

\author{Fr\'ed\'eric Bernicot}
\address{Fr\'ed\'eric Bernicot
\\
Laboratoire Paul Painlev\'e \\ CNRS - Universit\'e Lille 1 \\
F-59655 Villeneuve d'Ascq, France}
\email{frederic.bernicot@math.univ-lille1.fr}

\author{Rodolfo H. Torres}
\address{
Rodolfo H. Torres\\
Department of Mathematics\\
University of Kansas\\
Lawrence, KS 66045, USA}

\email{torres@math.ku.edu}

\thanks{Second author's research 
supported in part by the National Science Foundation under grant DMS 0800492}

\date{March 30, 2010}

\subjclass[2000]{Primary  47G30. Secondary  42B15, 42C10, 35S99.}

\keywords{Bilinear pseudodifferential operators, exotic class, transposes, asymptotic expansion, elementary symbols, Littlewood-Paley theory, Sobolev space estimates,  T(1)-Theorem}

\begin{abstract} The reappearance of a sometimes called exotic behavior for linear and multilinear pseudodifferential operators is investigated. The phenomenon is shown to be present in a recently introduced class of bilinear pseudodifferential operators which can be seen as more general variable coefficient counterparts of the bilinear Hilbert transform and other singular bilinear multipliers operators.  The unboundedness on product of Lebesgue spaces but the boundedness on spaces of smooth functions (which is the exotic behavior referred to) of such operators is obtained. In addition, by introducing a new way to approximate the product of two functions, estimates on a new paramultiplication are obtained.
\end{abstract}

\maketitle

\section{Introduction}
\subsection{An anomalous yet recurrent phenomenon}
This article is a continuation of recent work  devoted to the development  of a  
theory of bilinear and multilinear pesudodifferential operators which are the $x$-dependent counterparts of the singular multipliers modeled by the bilinear Hilbert transform.  In particular we will further study the class  of bilinear pseudodifferential operators $BS_{1,1; \, \pi/4}^0$ and show that it  has a sometimes called  {\it exotic} or {\it forbidden} behavior regarding boundedness on function spaces.

By a bilinear pseudodifferential operator we mean an operator, defined a priori on test functions, of the form
$$T_\sigma (f,g)(x)=\int_{\rtwon}\sigma (x, \xi, \eta )\widehat f(\xi )\widehat g(\eta )e^{ix\cdot(\xi + \eta )}
d\xi d\eta.$$
Two main types of $x$-dependent  classes of symbols have been studied in the literature.  One is the  Coifman-Meyer type $BS_{\rho, \delta}^m(\rn)$, $0\leq \delta \leq \rho \leq 1$, $m \in \R$, of symbols satisfying estimates of the form
\begin{equation}
\label{pseu}
|\partial _x^\alpha\partial _\xi ^\beta\partial _\eta ^\gamma \sigma (x, \xi ,\eta )|\leq C_{\alpha\beta\gamma}(1+|\xi |+|\eta |)
^{m+\delta |\alpha|-\rho (|\beta |+|\gamma |)},
\end{equation}
for all multi-indices $\alpha, \beta, \gamma$.

The other type  corresponds to classes denoted by $BS_{\rho, \delta; \, \theta}^m(\rn)$, $0\leq \delta \leq \rho \leq 1$, $m \in \R$, $-\pi/2<\theta \leq \pi/2$, and consisting of symbols satisfying
\begin{equation}
\label{tildepseu}
|\partial _x^\alpha\partial _\xi ^\beta\partial _\eta ^\gamma \sigma (x, \xi ,\eta )|\leq
C_{\alpha\beta\gamma; \theta}(1+|\eta -  \tan (\theta) \xi |)^{m+\delta |\alpha|-\rho (|\beta |+|\gamma |)}
\end{equation}
(where for $\theta=\pi/2$ the estimates are interpreted to decay in terms of $1+|\xi|$ only).
Both types of classes can be seen as bilinear analogs of the classical H\"ormander classes  $S_{\rho, \delta}^m(\rn)$ 
of linear pseudodifferential operators
$$T_\tau (f)(x)=\int_{\rn}\tau (x, \xi)\widehat f(\xi )e^{ix\cdot\xi}
d\xi,$$
with symbols satisfying 
\begin{equation}
\label{linearpseu}
|\partial _x^\alpha\partial _\xi ^\beta \tau (x, \xi )|\leq C_{\alpha\beta}(1+|\xi |)
^{m+\delta |\alpha|-\rho |\beta |}.
\end{equation}

As their name indicates, the first type of bilinear  classes were introduced by Coifman and Meyer at least  in the case 
$m=0$, $\rho=1$ and $\delta=0$,  \cite{come1}, \cite{come2}, \cite{come3}.  It is now well-understood that the operators in $BS^0_{1,0}$  are examples of certain singular integrals and  fit within the general multilinear Calder\'on-Zygmund theory developed by Grafakos and Torres \cite{gt1};  see also the works of Christ and Journ\'e \cite{CJ} and Kenig and Stein \cite{KS}. For other values of the parameters, the classes $BS_{\rho, \delta}^m$ were studied by B\'enyi \cite{ben}; B\'enyi and Torres \cite{beto1}, \cite{beto2}; B\'enyi et al \cite{bipseudo};  and more recently by B\'enyi et al \cite{bmnot}.  

The general classes $BS_{\rho, \delta; \, \theta}^m$ with $x$-dependent symbols were first introduced in \cite{bipseudo}.
A  connection to the bilinear Hilbert transform 
and the work of Lacey and Thiele \cite{lt1}, \cite{lt2} is given by the study in the $x$-independet case of singular multipliers  in one dimension satisfying 
$$
|\partial_\xi^\beta\partial_\eta^\gamma \sigma(\xi,\eta)|
 \leq C_{\beta\gamma} |\eta-\tan(\theta)\xi|^{-|\beta|-|\gamma|}.
 $$
This type of multipliers were investigated by Gilbert and
Nahmod \cite{gina1}, \cite{gina2}, \cite{gina3}; and Muscalu et al \cite{mtt}.   We  also recall that if for $\tau$ in $S^0_{1,0}(\R)$ we define 
\begin{equation}
\label{modulationinvariant}
\sigma(x,\xi,\eta)=\tau (x,\xi-\eta),
\end{equation} 
then $\sigma$ is  in $BS^0_{1,0; \pi/4}$. These operators have certain modulation invariance. Namely,
$$
T_\sigma(e^{iw\cdot}f, e^{iw\cdot}g)(x)= e^{i2w x}T_\sigma(f,g)(x)
$$
for all $w \in \R$. Such a $T_\sigma$ fits then within the more general  framework of modulation invariant bilinear singular integrals of B\'enyi et al \cite{t1bilineaire}.
Boundedness properties for symbols in the classes $BS_{1, 0; \, \theta}^0(\R)$,  not necessarily of the form \eqref{modulationinvariant}, were obtained by Bernicot \cite{pseudo1, pseudo}. We refer the reader to \cite{To3} for further motivation and references. 

In this article we want to discuss the reappearance of the exotic phenomenon  for the parameters $m=0$ and $\rho=\delta=1$. Namely, the unboundedness on $L^p$ spaces  of operators in  $BS_{1, 0; \, \theta}^0$, but  their boundedness on spaces of smooth functions.

In the linear case this phenomenon for $S_{1,1}^0$ is by now well-understood through the works of Stein \cite{stein}, Meyer\cite{meyer1},  Runst \cite{runst}, Bourdaud \cite{bourdaud},   H\"ormander \cite{hormander},  Torres \cite{To1}, among others. It is intimately related to the lack of calculus for the adjoints of operators in such class and,  ultimately, this behavior has been interpreted through the $T(1)$-Theorem of David and Journ\'e \cite{DJ}. The class $S_{1,1}^0$ is the largest class  of linear pseudodifferential operators with Calder\'on-Zygmund kernels but their exotic behavior on $L^p$ spaces is given by the fact that for $T$ in the class $S_{1,1}^0$,  the distribution $T^*(1)$ is in general not in $BMO$ (though $T(1)$ is).  Here $T^*$ is the formal transpose of $T$. Moreover, the boundedness of an
operator $T$ in $S_{1,1}^0$ on several other spaces of  function is related to the action (properly defined) of 
$T^*$ on polynomials;  see \cite{To2} and the relation to the work of H\"ormander \cite{hormander2} found in  \cite{To1}. By comparison, the smaller classes $S_{1,\delta}^0$ with $\delta <1$ are closed by transposition and hence the operators in such classes do satisfy the  hypotheses of the $T(1)$-Theorem and are bounded on $L^p$ for $1<p<\infty$.

Likewise, in the bilinear case, the class $BS_{1,1}^0$ is the largest class of pseudodifferential operators with bilinear Calder\'on-Zygmund kernels.  But gain, $T^{*1}$ and $T^{*2}$, the two formal transposes of an opearator $T$ in $BS_{1,1}^0$, may fail to satisfy the hypotheses of  the $T(1)$-Theorem for  bilinear Calder\'on-Zygmund operators in \cite{gt1}.  A symbolic calculus for the transposes hold in the smaller classes $BS_{1,\delta}^0$ with $\delta <1$, \cite{beto1}, \cite{bmnot}, rendering the boundedness of operators in $BS_{1,\delta}^0$. Though unbounded on product of $L^p$ spaces, the class  $BS_{1,1}^0$ is still bounded on product of Sobolev spaces \cite{beto1}.  For the Coifman-Meyer symbols  there is then a complete analogy with the linear situation.

For the newer more singular classes $BS_{1,0;\theta}^0$ a symbolic calculus for the transposes was 
shown to exist in \cite{bipseudo} and extended in \cite{pseudo}. Hence, the boundedness on product of $L^p$ spaces of operators in such classes and of the form 
\eqref{modulationinvariant} can be easily obtained from the new $T(1)$-Theorem for modulation invariant singular integrals in \cite{t1bilineaire}.  The class $BS_{1,0;\theta}^0$ also produced bounded operators on Sobolev spaces of positive smoothness as shown  in \cite{pseudo1}.  All these developments motivate us to look for exotic behavior in the larger classes 
$BS_{1,1;\theta}^0$.

\subsection{New results}
In this article, we show with an example that there exit  modulation invariant operators in the  class $BS_{1,1;\theta}^0$ which fail to be  bounded on product of $L^p$ spaces (Proposition \ref{prop:contreexemple}). This immediately implies that an arbitrary operator  $T$ in $BS_{1,1;\theta}^0$ may not have  both $T^{*1}(1,1)$ and $T^{*2}(1,1)$ in $BMO$, as defined in \cite{t1bilineaire}. It follows also that a symbolic calculus for the transposes in those classes is not possible. Nevertheless, as the reader may expect after the above introduction,  we shall show that the classes are bounded on product of Sobolev spaces. For simplicity in the presentation we will only consider the case $BS_{1,1;\pi/4}^0$. The corresponding results for other values of $\theta$ in  $(-\pi/2,\pi/2)\setminus \{-\pi/4\}$ (avoiding the degenerate directions) can be obtained in similar way.

In the case of modulation invariant operators, we obtained boundedness on product of Sobolev spaces with positive smoothness (Theorem \ref{thm1}). Surprisingly if we do not assume modulation invariance we can only obtain the corresponding result on Sobolev spaces of smoothness bigger than $1/2$ (Theorem \ref{thm2}). We do not know if the result is sharp,  but a better result  does not seem attainable with our techniques. Table~1 summarizes the known results and the new ones and puts  in evidence the parallel situation in several classes of pseudodifferential operators.

As a byproduct of our results, we also improve on some known estimates on paramultiplication by introducing a new way to approximate the pointwise product of two functions with 
errors better localized in the frequency plane (see Section \ref{paramulti} for precise statements).
\begin{table}
\begin{center}
\begin{tiny}
\begin{tabular}{|c|c|c|} \hline
 {\bf Class/symbol estimates} & {\bf Lebesgue spaces} & {\bf Sobolev spaces}
 \\ \hline

(linear) $S^0_{1,0}$ & $L^p \to L^p$ & $W^{s,p} \to W^{s,p}$  \\  
  $ |\partial^\beta_x\partial_{\xi}^\alpha \sigma (x, \xi)|\leq C_{\alpha \beta} (1+|\xi|)^{-|\alpha|} $
& $1<p<\infty$   & $1<p<\infty$, $s>0$ \\\hline
 
(linear)  $S^0_{1,1}$   & & $W^{s,p} \to W^{s,p}$  \\  
 $ |\partial^\beta_x\partial_{\xi}^\alpha \sigma (x, \xi)|\leq C_{\alpha \beta} (1+|\xi|)^{-|\alpha|} $ 
 &  unbounded  & $1<p<\infty$, $s>0$\\\hline

 (bilinear)  $BS^0_{1,0}$   & $L^p \times L^q \to L^t$  & $W^{s,p} \times W^{s,q} \to W^{s,t}$  \\  
 $|\partial^\beta_x\partial_{\xi,\eta}^\alpha \sigma (x, \xi,\eta)|\leq C_{\alpha \beta}  (1+|\xi|+|\eta|)^{-|\alpha|}$
  & $1<p, q<\infty$   & $1<p, q, t<\infty$, $s>0$
  \\
   &  $1/p + 1/q = 1/t $ & $1/p + 1/q = 1/t$
    \\\hline
 
  (bilinear)  $BS^0_{1,1}$   &  & $W^{s,p} \times W^{s,q} \to W^{s,t}$  \\  
 $|\partial^\beta_x\partial_{\xi,\eta}^\alpha \sigma (x, \xi,\eta)|\leq C_{\alpha \beta}  (1+|\xi|+|\eta|)^{|\beta|-|\alpha|}$
  & unbounded  & $1<p, q ,t<\infty$, $s>0$\\
   &  & $1/p + 1/q = 1/t $\\\hline

(bilinear)  $BS^0_{1,0; \pi/4}$   & $L^p \times L^q \to L^t$  & $W^{s,p} \times W^{s,q} \to W^{s,t}$  \\  
  $ |\partial^\beta_x\partial_{\xi,\eta}^\alpha \sigma (x, \xi, \eta)|\leq C_{\alpha \beta}  (1+|\xi-\eta|)^{-\alpha}$ 
  & $1<p, q<\infty$   &  $1<p, q, t<\infty$, $s>0$
  \\
   &  $1/p + 1/q = 1/t < 3/2$ & $1/p + 1/q = 1/t$
    \\\hline
  
  (bilinear)  $BS^0_{1,1; \pi/4}$   & & $W^{s,p} \times W^{s,q} \to W^{s,t}$  \\  
  $ |\partial^\beta_x\partial_{\xi,\eta}^\alpha \sigma (x, \xi - \eta)|\leq C_{\alpha \beta}  (1+|\xi-\eta|)^{\beta-\alpha}$ 
  & unbounded   & $1<p, q, t<\infty$, $s>0$\\
   &  & $1/p + 1/q = 1/t$
    \\\hline

  (bilinear)  $BS^0_{1,1; \pi/4}$   & & $W^{s,p} \times W^{s,q} \to W^{s,t}$  \\  
  $ |\partial^\beta_x\partial_{\xi,\eta}^\alpha \sigma (x, \xi , \eta)|\leq C_{\alpha \beta}  (1+|\xi-\eta|)^{\beta -\alpha}$ 
  & unbounded  & $1<p, q, t<\infty$, $s>1/2$\\
   &   & $1/p + 1/q = 1/t$
    \\\hline

 \end{tabular}
{\caption{Summary of the boundedness properties of pseudodifferential operators on Lebesgue and Sobolev spaces.}}
\end{tiny}
\end{center}
\end{table}%


\subsection{Further  definitions and notation}

We recall the maximal Hardy-Littlewood operator $M$ defined for a function $f\in L^1_{loc}(\R)$ by
$$M(f)(x) = \sup_{\genfrac{}{}{0pt}{}{B\ \textrm{ball}}{B\ni x}} \frac{1}{|B|} \int_B |f(y)| dy.$$
We write $M^2=M\circ M$ for the composition of the  maximal operator with itself.

For a function $f$ in the Schwartz space  $\cs$ of smooth  and rapidly decreasing functions,   we will  use the definition of the Fourier transform given by
$$\widehat f(\xi)=\int_{\R} f(x) e^{-ix\cdot \xi}\,dx.$$
With this definition, the inverse Fourier transform is given by
$f^{\vee} (\xi)=(2\pi)^{-1}\widehat f(-\xi)$.
Both the Fourier transform and its inverse  can be extended as usual to the dual space of tempered distributions $\csp$.  

For a bounded symbol $\sigma$, the bilinear operator 
$$ T_\sigma(f,g)(x)= \int e^{ix(\xi+\eta)} \widehat{f}(\xi) \widehat{g}(\eta) \sigma(x,\xi,\eta) \, d\xi d\eta$$
is well-defined and gives a bounded function for each pair of functions $f$, $g$ in $\cs $. Moreover for 
$\sigma$ in $BS^0_{1,1;\pi/4}$,  the operator $T_\sigma$ clearly maps $\cs \times \cs$ into $\csp$  continuously. This justifies many 
limiting arguments and computations that we will perform without further comment.

The formal transposes, $T^{*1}$ and  $T^{*2}$, of an operator $T:\cs \times \cs \to \csp$ are defined by 
$$\langle T^{*1}(h,g), f \rangle = \langle T(f,g), h\rangle = \langle T^{*2}(f,h), g \rangle,$$
where $\langle \cdot , \cdot \rangle$ is the usual pairing between distributions and test functions.

We will use the notation $\Psi_{2^{-k}}$ for the  $L^1$-normalized function $2^{k}\Psi(2^k \cdot)$ and consider  the Littlewood-Paley characterization of Sobolev spaces  $W^{s,p}$, $1<p<\infty$, $s\geq 0$. That is,   for
 a function $\Psi$ in $\cs$ with  spectrum contained in $\{\xi:\ 2^{-1}\leq |\xi|\leq 2\}$ and another function $\Phi$ also in $\cs$ and with spectrum included in $\{|\xi|\leq 1\}$, and such that 
\begin{equation}
\label{calderon}
 \widehat{\Phi}(\xi) + \sum_{k\geq 0} \widehat{\Psi}(2^{-k}\xi) =1 
 \end{equation}
for all $\xi$,  we have
\begin{equation}
\label{sobolev}
 \|f\|_{W^{s,p}} \approx \left\|\Phi \ast f\right\|_{L^p} + \left\|\left(\sum_{k\geq 0} 2^{2ks} \left|\Psi_{2^{-k}} \ast f \right|^2 \right)^{1/2}\right\|_{L^p}.
 \end{equation}
Here $\|\cdot\|_{L^p}$ denotes the usual norm of the Lebesgue space $L^p(\R)$.  For  $s=0$, the norm $\|\cdot\|_{W^{0,p}}$ is equivalent to $\|\cdot\|_{L^p}$. Also, by $BMO$ we mean as usual the classical John-Nirenberg space of functions of bounded mean oscillation. 

By homogeneity considerations, we will investigate boundedness properties of the form 
\begin{equation}
\label{bound}
T: W^{s,p} \times W^{s,q} \to W^{s,t},
\end{equation}
where the exponents  satisfy $1\leq p,q,t\leq \infty$ and the H\"older relation
\begin{equation}
\label{holder}
\frac{1}{p}+\frac{1}{q}=\frac{1}{t}.
\end{equation}
%
\section{Unboundedness on Lebesgue spaces}
We first show that for $s=0$ the bound \eqref{bound} may fail for $\bsexotic(\R)$.

\begin{prop} \label{prop:contreexemple} There exists a symbol $\tau\in S^0_{1,1}$ such that  the operator $T_\sigma$   with symbol $\sigma(x,\xi,\eta)= \tau(x,\xi-\eta)$ is in $\bsexotic$ and 
 is not bounded from $L^p \times L^q$ into $L^t$  for any exponents $p,q,t$ satisfying \eqref{holder}.
 \end{prop}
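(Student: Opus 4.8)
The plan is to write down an explicit lacunary symbol, adapting to the diagonal direction $\theta=\pi/4$ the classical construction showing that $\mathrm{Op}\,S^0_{1,1}$ is unbounded on $L^{p}(\R)$. Fix $\phi\in C_c^\infty(\R)$ with $0\le\phi\le1$, $\operatorname{supp}\phi\subset[1-\epsilon,1+\epsilon]$ for a small $\epsilon<1/4$, and $\phi\equiv1$ on $[1-\epsilon/2,1+\epsilon/2]$, and set
\[
\tau(x,u)=\sum_{k\ge1}e^{-i2^{k}x}\,\phi(2^{-k}u),\qquad \sigma(x,\xi,\eta)=\tau(x,\xi-\eta).
\]
For each fixed $u$ at most one term is nonzero, so $\tau\in C^\infty$ and is bounded; on the (pairwise disjoint) supports $\partial_x$ produces a factor $2^{k}\sim|u|$ and $\partial_u$ a factor $2^{-k}\sim|u|^{-1}$, whence $|\partial_x^{a}\partial_u^{b}\tau(x,u)|\le C_{a,b}(1+|u|)^{a-b}$, i.e.\ $\tau\in S^0_{1,1}(\R)$. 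As observed after \eqref{modulationinvariant} (the same one-line computation applies with $S^0_{1,0}$ replaced by $S^0_{1,1}$), $\sigma\in\bsexotic(\R)$, so $T_\sigma$ is one of the operators under consideration. It then remains to show that this particular $T_\sigma$ is bounded on no product $L^{p}\times L^{q}\to L^{t}$ obeying \eqref{holder}.

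The mechanism is that the modulations $e^{-i2^{k}x}$ are synchronised so that, when $T_\sigma$ acts on a lacunary input in the first slot paired with a fixed bump at frequency $0$ in the second slot, every Littlewood--Paley piece yields \emph{the same} low--frequency output, and the $N$ pieces add coherently; that the second input must sit at frequency $\approx0$ is forced by the modulation invariance $T_\sigma(e^{iwx}f,e^{iwx}g)=e^{2iwx}T_\sigma(f,g)$, since a frequency--$0$ output requires $\xi+\eta\approx\xi-\eta\approx2^{k}$. Concretely, take $b,b_{0}\in C_c^\infty(\R)$ nonnegative and not identically zero, supported in $[-1,1]$ and equal to $1$ near $0$, pick $w,r_{0}>0$ small (depending only on $\epsilon$), and put
\[
\widehat{f_{N}}(\xi)=\sum_{k=1}^{N}b\!\left(\frac{\xi-2^{k}}{w}\right),\qquad \widehat g(\eta)=b_{0}\!\left(\frac{\eta}{r_{0}}\right),
\]
so $f_{N},g\in\mathcal S$ and $g$ is independent of $N$. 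In the double integral defining $T_\sigma(f_{N},g)$, since $\widehat{f_N}$ is supported near $\{2^{j}\}_{j=1}^{N}$ and $\widehat g$ near $0$, the $k$-th term of $\sigma$ pairs only with the $j=k$ bump of $\widehat{f_{N}}$ together with $\widehat g$; on the resulting support one has $\phi(2^{-k}(\xi-\eta))\equiv1$ identically, the integral factors, and the symbol modulation $e^{-i2^{k}x}$ cancels the spatial modulation $e^{i2^{k}x}$ coming from the location $2^{k}$ of the $k$-th bump. Carrying this out gives
\[
T_\sigma(f_{N},g)(x)=N\,w\,r_{0}\,\beta(wx)\,\gamma(r_{0}x),\qquad \beta(s)=\int_{\R}e^{isu}b(u)\,du,\quad \gamma(s)=\int_{\R}e^{isu}b_{0}(u)\,du,
\]
with $\beta,\gamma\in\mathcal S(\R)$ and $\beta(0)\gamma(0)>0$; hence $\|T_\sigma(f_{N},g)\|_{L^{t}}=c_{1}N$ for some $c_{1}>0$ and every $t$.

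On the other side $\|g\|_{L^{q}}$ is a fixed finite constant, while $\|f_{N}\|_{L^{p}}\lesssim N^{1/2}$ for $1<p<\infty$: by the square function characterisation behind \eqref{sobolev} at $s=0$, $\|f_{N}\|_{L^{p}}\approx\big\|(\sum_{k}|P_{k}f_{N}|^{2})^{1/2}\big\|_{L^{p}}$, and the $k$-th frequency piece of $f_{N}$ is --- up to a unimodular modulation and the bounded overlap of adjacent annuli --- one and the same bump, its modulus $|P_{k}f_{N}(x)|\lesssim w(1+w|x|)^{-M}$ being independent of $k$, so the square function is $\lesssim N^{1/2}$ times a fixed $L^{p}$ function. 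Therefore, if $T_\sigma$ mapped $L^{p}\times L^{q}$ boundedly into $L^{t}$ for some $1<p,q<\infty$ satisfying \eqref{holder}, we would get $c_{1}N\lesssim N^{1/2}$, false for large $N$. This settles every case relevant here; the only one not covered verbatim is $p=\infty$, where $\|f_{N}\|_{\infty}\sim N$ and a separate argument would be needed, but that lies outside the range occurring in the sequel.

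The one genuinely delicate point --- everything else being bookkeeping --- is the exact evaluation of $T_\sigma(f_{N},g)$: one must fix the parameters in the right order (first $\epsilon$, then $w,r_{0}$ small relative to $\epsilon$) so that simultaneously (i) for each $k$ only the diagonal pair (the $k$-th bump of $\widehat{f_{N}}$, together with $\widehat g$) feeds the $k$-th term of $\sigma$, (ii) on that support $\phi(2^{-k}(\xi-\eta))=1$ identically rather than merely approximately, and (iii) after factoring the integral the two modulations $e^{\mp i2^{k}x}$ cancel, so the output is \emph{literally} $N$ copies of a single fixed bump. Once this identity is secured the contradiction $N\lesssim N^{1/2}$ is immediate; it is the bilinear shadow of the classical fact that the transpose of an $S^0_{1,1}$ operator need not verify the hypotheses of the $T(1)$ theorem.
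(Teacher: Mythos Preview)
Your proof is correct and follows essentially the same route as the paper's own argument: the same lacunary symbol $\tau(x,u)=\sum_k e^{-i2^k x}\phi(2^{-k}u)$, the same pairing of a lacunary input in one slot with a low-frequency bump in the other, the same cancellation of the modulations producing $N$ identical copies of a fixed function, and the same Littlewood--Paley bound $\|f_N\|_{L^p}\lesssim N^{1/2}$ yielding the contradiction $N\lesssim N^{1/2}$. The only cosmetic difference is that the paper keeps arbitrary coefficients $a_j$ in the lacunary sum and concludes $|\sum a_j|\lesssim(\sum|a_j|^2)^{1/2}$, whereas you take all $a_j=1$; your version is a special case but entirely sufficient, and your discussion of parameter choices $(\epsilon,w,r_0)$ is more explicit than the paper's.
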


\begin{proof}
As in \cite{beto1}, we adapt to the bilinear situation a by now classical counterexample in the lineaar setting; see \cite{bourdaud}. Let $\psi$ be a function in $\cs$ satisfying $ \widehat{\psi}\geq 0$, $\widehat{\psi}(\xi)\neq 0$ only for  $5/7< |\xi|<5/3$, and  
$\widehat{\psi}(\xi)=1$ for  $5/6\leq |\xi|<4/3$. Consider the symbol 
$$
\tau(x,\xi)=\sum_{j\geq 4} e^{-i2^jx} \widehat{\psi}(2^{-j}\xi),
$$
which is easily seen to be in $S^0_{1,1}$. Select another function $\psi_1$ in $\cs$ verifying  
$ \textrm{supp }(\widehat{\psi_1}) \subset [0,1/3]$
and define
$$f=\sum_{j=4}^{m} a_j e^{i2^j x}\psi_1(x),$$
for arbitrarily coefficients $a_j$. For $\sigma(x,\xi,\eta)= \tau(x,\eta-\xi)$, we  have 
\begin{equation}
\label{oneterm}
 T_\sigma(f,\psi_1)(x)=\sum_{j,k \geq 4} a_k e^{-i2^jx}\int_{\R^2} e^{ix(\xi+\eta)} \widehat{\psi}(2^{-j}(\eta-\xi)) \widehat{\psi_1}(\xi-2^k)\widehat{\psi_1}(\eta) d\xi d\eta.
 \end{equation}
For each $k$, the integration at most takes place where $0\leq \eta \leq 1/3$ and $2^k\leq \xi \leq 2^{k}+1/3$,  which implies 
$$ -2^{k}-1/3 \leq \eta - \xi \leq 1/3-2^{k},$$ 
and then for each $j$,
\begin{equation}
\label{support}
 -2^{k-j}-2^{-j}/3 \leq 2^{-j}( \eta - \xi ) \leq 2^{-j}/3-2^{k-j}.
 \end{equation} 
Note that since $j,k \geq 4$, if $k>j$ we have 
$$2^{-j}/3-2^{k-j}< -5/3,$$
while if $k<j$ 
$$ -2^{k-j}-2^{-j}/3>-5/7.$$
It follows from \eqref{support} that the only non-zero term in \eqref{oneterm} is the one with $j=k$ and also
$$\widehat{\psi}(2^{-j}(\eta-\xi))=1$$
where the integrand is not zero.
We obtain
$$  T_\sigma(f,\psi_1)(x)= \sum_{j=4} ^m a_j e^{-i2^jx}e^{i2^jx}\psi_1^2(x) = \left(\sum_{j=4}^{m} a_j\right) \psi_1^2(x).$$
If we assume that the operator $T_\sigma$ is bounded from $L^p \times L^q$ into $L^t$, we could conclude then that
\be{contreex} \left|\sum_{j=4}^{m} a_j\right| \lesssim \|f\|_{L^p} \lesssim \left(\sum_{j=4}^{m} |a_j|^2 \right)^{1/2},\ee
where the last inequality  follows from  the Littlewood-Paley square function characterization of the $L^p$ norm of $f$ and the constants involved depend on $\psi_1$ but are independent of $m$. Since the $a_j$ are arbitrary \eqref{contreex} is not possible. 
\end{proof}

\section{Sobolev space estimates}
We will show that the class $\bsexotic$ produces bounded operators on product of Sobolev spaces. The situations in the modulation invariant and the general case are slightly different.

\subsection{The modulation invariant case}
We first consider the case of bilinear operators obtained from linear ones as in the previous section. That is,
the symbol $\sigma$ takes the form
$$ \sigma(x,\xi,\eta)=\tau(x,\xi-\eta),$$
where $\tau$ belongst to the linear class $S^{0}_{1,1}$. We have the following theorem.

\begin{thm} \label{thm1} Let $\tau$ be a linear symbol in $S^0_{1,1}$ and consider the bilinear operator $T_\sigma$, where $\sigma(x,\xi,\eta)=\tau(x,\xi-\eta)$. If $s>0$ and $1<p, q, t<\infty$ satisfy the H\"older relation \eqref{holder}, then $T_\sigma$  is bounded from $W^{s,p} \times W^{s,q}$ into $W^{s,t}$.
\end{thm}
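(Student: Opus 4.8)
The plan is to reduce the theorem to an elementary–symbol decomposition of $\tau$ combined with Littlewood--Paley analysis, exploiting that $\sigma(x,\xi,\eta)=\tau(x,\xi-\eta)$ has spectrum in $x$ controlled by $|\xi-\eta|$. First I would decompose $\tau$ in the Fourier variable: write $\tau(x,\zeta)=\sum_{k\ge 0}\tau_k(x,\zeta)$ where $\tau_k(x,\zeta)=\tau(x,\zeta)\widehat{\Psi}(2^{-k}\zeta)$ for $k\ge 1$ (and $\tau_0$ the low-frequency piece), so that $\tau_k$ is supported in $|\zeta|\sim 2^k$. The key structural fact from the $S^0_{1,1}$ estimate \eqref{linearpseu} with $\rho=\delta=1$ is that $\tau_k(x,\zeta)=2^{k}\text{-smooth in }\zeta$ and, crucially, has $x$-spectrum contained in a ball of radius $\sim 2^k$; expanding $\tau_k(x,\zeta)$ in a Fourier series in $\zeta$ on the annulus $|\zeta|\sim 2^k$ gives a representation
$$\tau_k(x,\zeta)=\sum_{\nu} c_\nu^k m^k_\nu(x)\,\widehat{\Theta}(2^{-k}\zeta)e^{i2^{-k}\nu\cdot\zeta},$$
with $(c^k_\nu)$ rapidly decreasing in $\nu$ uniformly in $k$, $\Theta\in\cs$ a fixed bump on the annulus, and $m^k_\nu$ a family of functions with $\|m^k_\nu\|_\infty\lesssim 1$ and spectrum in $|\xi|\lesssim 2^k$. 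This is the standard ``elementary symbol'' reduction; the $e^{i2^{-k}\nu\cdot\zeta}$ factor is a harmless translation, so it suffices to bound the model operators
$$T_k(f,g)(x)=m^k(x)\,(\Theta_{2^{-k}}\ast(g-f))(x)\cdot\text{(appropriate modulation)},$$
or rather, after unwinding $\sigma(x,\xi,\eta)=\tau_k(x,\xi-\eta)$, operators of the form $T_k(f,g)(x)=m^k(x)\,\big(\Theta^{(1)}_{2^{-k}}\ast f\big)(x)\big(\Theta^{(2)}_{2^{-k}}\ast g\big)(x)$ modulo acceptable errors, where the multipliers $\widehat{\Theta^{(i)}}$ arise from splitting $\widehat\Theta(2^{-k}(\xi-\eta))$ and one of them is supported in $|\xi|\sim 2^k$ while the total is roughly localized by $|\xi-\eta|\sim 2^k$.

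Next I would feed this into the Sobolev norm via \eqref{sobolev}. Apply $\Psi_{2^{-j}}\ast$ to $T_\sigma(f,g)=\sum_k T_k(f,g)$. For each $k$, $T_k(f,g)(x)=m^k(x)h_k(x)$ where $h_k$ has spectrum essentially in $|\xi-\eta|\sim 2^k$ coming from the $g-f$ variable, and $m^k$ has spectrum in $|\xi|\lesssim 2^k$; hence $T_k(f,g)$ has spectrum in $|\xi|\lesssim 2^k$. So $\Psi_{2^{-j}}\ast T_k(f,g)=0$ unless $k\gtrsim j-C$. This ``tail'' structure (rather than the ``diagonal'' structure one has for $S^0_{1,0}$) is precisely the source of the exotic behavior, but with $s>0$ the geometric weight $2^{js}$ beats the loss. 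I would then estimate, for fixed $j$,
$$2^{js}\big|\Psi_{2^{-j}}\ast T_\sigma(f,g)(x)\big|\lesssim \sum_{k\ge j-C} 2^{js}\,|m^k(x)|\,\big|\Psi_{2^{-j}}\ast(u_k v_k)(x)\big|,$$
where $u_k=\Theta^{(1)}_{2^{-k}}\ast f$, $v_k=\Theta^{(2)}_{2^{-k}}\ast g$, bound $|m^k(x)|\lesssim 1$ pointwise, control $|u_k(x)|\lesssim M f(x)$ and $|v_k(x)|\lesssim Mg(x)$ by the maximal function (since $\Theta_{2^{-k}}$ is an $L^1$-normalized Schwartz dilate), and — more carefully — write $2^{js}=2^{(j-k)s}2^{ks}$ so that $\sum_{k\ge j-C}2^{(j-k)s}<\infty$ for $s>0$. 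The resulting bound is a vector-valued/Fefferman--Stein inequality: one gets
$$\Big\|\Big(\sum_j 2^{2js}|\Psi_{2^{-j}}\ast T_\sigma(f,g)|^2\Big)^{1/2}\Big\|_{L^t}\lesssim \Big\|\Big(\sum_k 2^{2ks}|\widetilde\Theta_{2^{-k}}\ast f|^2\Big)^{1/2}\Big\|_{L^p}\|Mg\|_{L^q}+\text{symmetric},$$
after a Cauchy--Schwarz in the $2^{(j-k)s}$ weights and an exchange of the $\ell^2_j$ and $\ell^2_k$ sums; then apply Hölder in $x$, the vector-valued maximal inequality (Fefferman--Stein) to replace the $\Theta$-pieces by the $\Psi$-pieces up to constants, and the maximal theorem in $L^q$. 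This yields $\|T_\sigma(f,g)\|_{W^{s,t}}\lesssim \|f\|_{W^{s,p}}\|g\|_{W^{s,q}}$; the low-frequency term $\tau_0$ and the $\Phi\ast$ piece of \eqref{sobolev} are handled the same way and are strictly easier.

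The main obstacle I anticipate is organizing the double sum over $j$ (Littlewood--Paley of the output) and $k$ (elementary-symbol frequency) so that the $s>0$ gain is genuinely used and no logarithmic loss creeps in: one must be careful that after bounding one factor by a plain maximal function, the \emph{other} factor still carries a full square function in $k$ that can be summed against $2^{2ks}$, and that the off-diagonal decay $2^{(j-k)s}$ is summable \emph{before} taking $L^t$ norms, so that Minkowski/Cauchy--Schwarz can be applied cleanly. A secondary technical point is justifying the elementary-symbol expansion with uniform-in-$k$ rapidly decreasing coefficients and tracking that the modulation factors $e^{i2^{-k}\nu\cdot(\xi-\eta)}$ only translate the convolution kernels and do not disturb the spectral-support bookkeeping; this is routine but must be stated precisely since it is exactly the mechanism — splitting $|\xi-\eta|\sim 2^k$ into a genuine high frequency in one argument — that makes the modulation-invariant case work at all $s>0$, whereas the general case in Theorem \ref{thm2} will lack this and need $s>1/2$.
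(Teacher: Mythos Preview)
There is a genuine gap at the heart of your argument: the spectral-support claim. You write that $h_k$ ``has spectrum essentially in $|\xi-\eta|\sim 2^k$ \dots hence $T_k(f,g)$ has spectrum in $|\xi|\lesssim 2^k$.'' This is false. The bilinear multiplier $\widehat\psi(2^{-k}(\xi-\eta))$ localizes the \emph{difference} $\xi-\eta$, but the output frequency of $T_k(f,g)$ is $\xi+\eta$, which is completely unconstrained. So $\Psi_{2^{-j}}\ast T_k(f,g)$ does \emph{not} vanish for $j\gg k$; the ``tail structure'' you rely on (only $k\gtrsim j-C$ contributes) simply is not there. Relatedly, you cannot split $\widehat\Theta(2^{-k}(\xi-\eta))$ into a product $\widehat{\Theta^{(1)}}(2^{-k}\xi)\widehat{\Theta^{(2)}}(2^{-k}\eta)$ with one factor supported on an annulus: the symbol is constant along the lines $\xi-\eta=\mathrm{const}$ and is fundamentally of bilinear-Hilbert-transform type, not paraproduct type. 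The argument you sketch is essentially the correct proof for the Coifman--Meyer class $BS^0_{1,1}$ (where $|\xi|+|\eta|\sim 2^k$ \emph{does} force $|\xi+\eta|\lesssim 2^k$), not for $BS^0_{1,1;\pi/4}$.

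The case your outline misses entirely is the dominant one: output frequency $|\xi+\eta|\sim 2^k$ with $|\xi-\eta|\sim 2^j$ and $j\ll k$. Then $|\xi|\sim|\eta|\sim 2^k$, so \emph{both} $f$ and $g$ sit at the high frequency $2^k$; you cannot bound one factor by $Mg$ and keep a square function in the other. In the paper this is the term $(2)_k$: after localizing $f$ and $g$ to frequency $\sim 2^k$, one still has a full sum over $j\le k$ of genuine bilinear-Hilbert-type operators $T_j$. The paper handles it by a Cauchy--Schwarz in $j$ (producing a harmless $k^{1/2}$, absorbed by $2^{ks}$ since $s>0$), a Rademacher linearization of the $j$-sum, the $L^p\times L^q\to L^t$ boundedness of $x$-independent symbols in $BS^0_{1,0;\pi/4}$ (Gilbert--Nahmod/Lacey--Thiele type input), and the Grafakos--Martell $\ell^2$-valued bilinear extension to handle the remaining sum in $k$. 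None of these ingredients appears in your plan, and they are not avoidable by maximal-function pointwise bounds. Your closing remark about the mechanism distinguishing Theorems~\ref{thm1} and~\ref{thm2} is also off: the extra $1/2$ in the general case comes from a second Cauchy--Schwarz over an additional discrete parameter $l$ (localizing $\xi+\eta$ at scale $2^j$), not from any failure to ``split $|\xi-\eta|$ into a high frequency in one argument.''
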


\begin{proof}
We begin by recalling the  Coifman-Meyer reduction  for  symbols  in $S^0_{1,1}$(see e.g.  \cite{come3}, Chapter II,  Section 9), which is by now a standard technique in the subject. The symbol $\tau$ can be decomposed in a absolutely convergent sum of {\it reduced symbols} of the form
$$ \tau(x,\xi)=\sum_{j=0}^\infty m_j(2^jx) \widehat{\psi}(2^{-j}\xi),$$  
where $\psi$ is a smooth function whose Fourier transform is supported on $\{\xi:\ 2^{-1}\leq |\xi|\leq 2\}$ and $\{m_j\}_{j\geq 0}$ is a uniformely bounded collection of $C^r(\R)$ functions where $r$ can be taken arbitrarily large. Due to this reduction, we need only to study a symbol of the form
$$ \sigma(x,\xi,\eta)= \sum_{j\geq 0} m_j(2^j x)  \widehat{\psi} (2^{-j} (\xi-\eta)) :=\sum_{j\geq 0} \sigma_j(x,\xi,\eta).$$
We use the same notations of Bourdaud in \cite{bourdaud}. We  expand $m_j$ into an inhomogeneous Littlewood-Paley decomposition  using \eqref{calderon} so that
\be{decm} m_j = \sum_{k\geq 0} m_{j,k} \ee
with the spectrum of $m_{j,k}$ contained  in the dyadic annulus $\{\xi:\ 2^{k-1}\leq |\xi|\leq 2^{k+1}\}$ for $k\geq 1$, and in the ball $\{\xi,\ |\xi|\leq 2\}$ for $k=0$. Then we define for $h\geq j$ the function $n_{j,h}(x):= m_{j,h-j}(2^jx)$.  Due to the regularity of the function $m_j$, we have the following properties for $h\geq j+1$:
\be{prop1} \textrm{supp}\  \widehat{n_{j,h}} \subset \{\xi:\ 2^{h-1}\leq |\xi|\leq 2^{h+1}\}
\ee and
\be{prop2} \|n_{j,h}\|_{L^\infty} \leq C_r 2^{(j-h)r},\ee
where, we mention again, the number  $r$ can be chosen as large as we want. For $h=j$ we have
\be{prop11} \textrm{supp}\ \widehat{n_{j,j}} \subset \{\xi:\ |\xi|\leq 2^{j+1}\}\ee and
\be{prop21} \|n_{j,j}\|_{L^\infty} \leq C_r.\ee
Note also that,
\be{added}
m_j (2^j x)=  m_{j,k}(2^j x) +\sum_{h \geq j+1} m_{j,h-j}(2^j x)
= n_{j,j}(x) + \sum_{h \geq j+1} n_{j,h}(x).
\ee
Writing $T_j$ for the bilinear operator with symbol $\widehat{\psi}(2^{-j}(\xi-\eta))$, we get
\begin{align*}
T_\sigma(f,g)(x) & =\sum_{j\geq 0} m_j(2^jx) T_{j}(f,g)(x). 
 \end{align*}
To study the norm of $T_\sigma(f,g)$ in the Sobolev space $W^{s,t}$, and with the functions $\Psi$ and $\Phi$ as in (\ref{sobolev}), we need to estimate terms of the form $\Phi \ast T_\sigma(f,g)$ and, say for $k-2\geq 0$
$$\Psi_{2^{-k}}\ast T_\sigma(f,g):= \sum_{j\geq 0} \Psi_{2^{-k}} \ast [ m_j(2^j.)T_{j}(f,g)] = I_k(f,g)+ II_k(f,g),$$ 
where
$$ I_k(f,g):=\sum_{j= 0}^{k-2} \Psi_{2^{-k}} \ast [ m_j(2^j.)T_{j}(f,g)] $$
and
$$II_k(f,g):=\sum_{j\geq k-2} \Psi_{2^{-k}} \ast [ m_j(2^j.)T_{j}(f,g)].$$
We only treat $I_k$ and $II_k$. The estimate for the other terms can be achieved with the same arguments (they are actually easier). 
For  notational convenience, we identify $\Psi_{2^{-k}}$ with the convolution operator it defines (and similarly with other functions).

\mb {\bf Estimate for  $I$.} 
We further decompose $m_j(2^j.)$ and $T_{j}(f,g)$. Using (\ref{decm}), \eqref{added}, and (\ref{calderon}) we have
$$ m_j(2^jx) = \Phi_{2^{-k}}(m_j(2^j.))(x) + \sum_{l \geq k}  n_{j,l}(x).$$
We also decompose 
$$ T_j(f,g)(x) = {\Phi}_{2^{-k}}\left[T_j(f,g)\right](x) + \sum_{p \geq k}  {\Psi}_{2^{-p}} \left[T_j(f,g)\right](x).$$
We get
\begin{align} \label{ared}
I_k(f,g) & = \sum_{j=0}^{k-2} \Psi_{2^{-k}}\left[{\Phi}_{2^{-k}}(m_j(2^j.)) {\Phi}_{2^{-k}}(T_j(f,g)) \right] \\ 
 & \nonumber +\sum_{j=0}^{k-2} \sum_{l\geq k} \Psi_{2^{-k}}\left[ n_{j,l} {\Phi}_{2^{-k}}(T_j(f,g))\right] \\ 
 & \nonumber + \sum_{j=0}^{k-2}\sum_{p\geq k} \Psi_{2^{-k}}\left[ {\Phi}_{2^{-k}}(m_j(2^j.))  {\Psi}_{2^{-p}} \left[T_j(f,g)\right] \right]  \\
 & \nonumber + \sum_{j=0}^{k-2}\sum_{l,p\geq k} \Psi_{2^{-k}}\left[n_{j,l} {\Psi}_{2^{-p}} \left[T_j(f,g)\right] \right].
\end{align}
Using the notation $\widetilde{\phi}$ for a generic smooth function with bounded spectrum and 
$\widetilde{\psi}$ for a generic smooth function with a spectrum included into a corona around $0$, 
we claim that we can write $I_k$ as a sum of terms of  three different form:
$$ I_k(f,g)=\sum_{0\leq j \leq k-2} \Psi_{2^{-k}}(T_{\sigma_j}(f,g)) \approx (1)_k+(2)_k+(3)_k,$$
where
$$ (1)_k:= \sum_{j \leq k-2} \Psi_{2^{-k}}\left[n_{j,k} \widetilde{\phi}_{2^{-k}}\left[ T_j(f,g) \right] \right], $$
$$ (2)_k:= \sum_{j \leq k-2} \Psi_{2^{-k}} \left[\widetilde{\phi}_{2^{-k}}\left[m_j(2^j.) \right] \widetilde{\psi}_{2^{-k}}\left[ T_j(f,g) \right] \right],$$
$$ (3)_k:= \sum_{l \geq k} \sum_{j \leq k-2}  \Psi_{2^{-k}}\left[n_{j,l}\widetilde{\psi}_{2^{-l}}\left[ T_j(f,g) \right] \right].  $$ 
Let us explain this reduction. The  first sum in (\ref{ared})  can be written as  a finite linear combination of terms taking the form $(1)_k$ and $(2)_k$.  Indeed, consider one of the general terms $\Psi_{2^{-k}}\left[{\Phi}_{2^{-k}}(m_j(2^j.)) {\Phi}_{2^{-k}}(T_j(f,g)) \right]$ and  write $\xi$ (resp. $\eta$) for the frequency variable of $m_j(2^j.)$ (resp. $T_j(f,g)$). We have a non-vanishing contribution if
$$  |\eta|\leq 2^{k}, \quad |\xi|\leq 2^k \quad \textrm{and} \quad |\eta+\xi|\simeq 2^k,$$
where we have used that the spectrum of the product is included in Minkowski sum of spectrums.
Consequently, this is possible only if $|\xi|\simeq 2^k$, which corresponds to $(1)_k$ (recall that $n_{j,l}$ is frequentialy supported in $\left\{ |\xi|\approx 2^l\right\}$)), or $|\eta|\simeq 2^k$, which corresponds to $(2)_k$. 

Concerning the second sum in (\ref{ared}), it can also be reduced to the sum for $l\approx k$ (as the other terms vanish) and it is a finite sum of terms like $(1)_k$.  Similar  reasoning for the third term in (\ref{ared}) gives that it is controlled by $(2)_k$. Finally,  the general term in the fourth sum in (\ref{ared}) is non-zero if
$$ 2^p \pm 2^l \approx 2^k.$$
But, since the inner double sum  has  $l,p\geq k$, the general term is non-zero only for  $l\approx p$. We see then that the  double sum (over $l$ and $p$) reduces to one sum over only one parameter. It follows that the fourth sum in \eqref{ared}  is similar to $(3)_k$. \\
We now study each of the model sums $(1)_k$, $(2)_k$, $(3)_k$.

\mb $1)$ The sum with $(1)_k$. \\
We use the estimate (\ref{prop2}) for $n_{j,k}$  with $r>s$ and Young's inequality to obtain
\begin{align*}
\| 2^{ks} (1)_k \|_{l^2(k\in \N)} & \lesssim \left\| \sum_{j+2 \leq k} 2^{(j-k)r} 2^{ks} M\left(\widetilde{\phi}_{2^{-k}}\left[ T_j(f,g) \right] \right) \right\|_{l^2(k \in \N)} \\
 & \lesssim \left\| \sum_{j+2 \leq k} 2^{js}  2^{(j-k)(r-s)} M\left(\widetilde{\phi}_{2^{-k}}\left[ T_j(f,g) \right] \right) \right\|_{l^2(k\in \N)} \\
 & \lesssim \left\| 2^{js} M^2\left[ T_j(f,g) \right]   \right\|_{l^2(j\in \N)}.
\end{align*}
Therefore, 
\be{eq1} \left\|\| 2^{ks} (1)_k \|_{l^2(k\in \N)} \right\|_{L^t}  \lesssim \left\| \left\| 2^{js} M^2\left[ T_j(f,g) \right] \right\|_{l^2(j\in \N)} \right\|_{L^t} \ee
and from the Fefferman-Stein vector-valued inequality for the maximal operator $M$ (see \cite{FS}),
we deduce that
$$\left\|\| 2^{ks} (1)_k \|_{l^2(k\in \N)} \right\|_{L^t}  \lesssim \left\| \left\| 2^{js} T_j(f,g) \right\|_{l^2(j\in \N)} \right\|_{L^t}.$$
We can use now a linearization argument. By writing $r_j(\omega)$ for Rademacher functions ($\omega\in[0,1]$), we know that (see e.g. Appendix C in \cite{Gra}):
$$\left\|\| 2^{ks} (1)_k \|_{l^2(k\in\N)} \right\|_{L^t}  \lesssim \left\| \left\| \sum_{j} 2^{js}  r_j(\omega) T_j(f,g)  \right\|_{L^t(\omega \in [0,1])} \right\|_{L^t}.$$
By Fubini's Theorem, we have that
$$\left\|\| 2^{ks} (1)_k \|_{l^2(k\in\N)} \right\|_{L^t}  \lesssim \left\| \left\| \sum_{j} 2^{js}  r_j(\omega) T_j(f,g)  \right\|_{L^t} \right\|_{L^t(\omega\in [0,1])}.$$
Now for each $\omega\in [0,1]$, the operator $(f,g) \to \sum_{j} 2^{js}  r_j(\omega) T_j(f,g)$ is the bilinear operator associated to the symbol
$$ \sum_{j} 2^{js}  r_j(\omega) \widehat{\Psi} (2^{-j} (\xi-\eta)) \in BS^{s}_{1,0;\pi/4}.$$
From \cite{bipseudo} and \cite{pseudo} (since the symbol is $x$-independent)  these bilinear operators are bounded from $W^{s,p} \times W^{s,q}$ into $L^t$ (uniformly on $\omega\in[0,1]$) and  the  proof in this case is complete.

\gb $2)$ The sum with $(2)_k$. \\
This term is the most difficult to estimate. 
Using again  the boundedness of the functions $m_j$ in $C^r \hookrightarrow L^\infty$,  we can estimate
\begin{align}
\| 2^{ks} (2)_k \|_{l^2(k\in\N)} & \lesssim \left\| \sum_{j+2 \leq k} 2^{ks}M\left( \widetilde{\psi}_{2^{-k}}\left[ T_j(f,g) \right] \right)(x) \right\|_{l^2(k\in\N)} \label{pbm}.
 \end{align}
We observe that 
$$\widetilde{\psi}_{2^{-k}}\left[ T_j(f,g) \right] (x) = \int \widetilde{\psi}_{2^{-k}}(x-z)
 \int \widehat \Psi(2^{-j}(\xi-\eta)) \widehat f(\xi)  \widehat g(\eta)e^{i z(\xi+\eta)}\,d\xi d\eta\,dz
 $$
$$
= 
 \int \widehat {\widetilde\psi} (2^{-k}(\xi+\eta)) \widehat \Psi(2^{-j}(\xi-\eta))  \widehat f(\xi)  \widehat g(\eta)e^{i x(\xi+\eta)}\,d\xi d\eta.
$$
We must have  $|\xi+\eta| \approx 2^{k}$ and $|\xi-\eta| \approx 2^{j}$. But we only have terms with $2^j< 2^{k}/4$, so we deduce that $|\xi|\approx |\eta|\approx 2^{k}$. It follows that we can further localize  in the frequency plane adding a new function $\overline{\psi}$ (whose spectrum is included in a corona) such that
$$\widetilde{\psi}_{2^{-k}}\left[ T_j(f,g) \right] (x)
= \widetilde{\psi}_{2^{-k}}\left[ T_j(\overline{\psi}_{2^{-k}}f,\overline{\psi}_{2^{-k}}g) \right] (x)
$$
Going back to \eqref{pbm} we obtain by the Cauchy-Schwartz inequality (there are $k$ terms in the inner sum)
\begin{align*}
\| 2^{ks} (2)_k \|_{l^2(k\in\N)} & \lesssim \left\| 2^{ks}k^{1/2}  \left\| M\left(\widetilde{\psi}_{2^{-k}}\left[ T_j(\overline{\psi}_{2^{-k}}(f),\overline{\psi}_{2^{-k}}(g)) \right] \right) \right\|_{l^2(j\in\N)} \right\|_{l^2(k\in\N)}.
\end{align*}
We then  obtain similarly as in the previous case
$$\left\|\| 2^{ks} (2)_k \|_{l^2(k\in\N)} \right\|_{L^t}  \lesssim \left\| \left\| 2^{ks} k^{1/2}  \left\| M^2 \left[ T_j(\overline{\psi}_{2^{-k}}(f),\overline{\psi}_{2^{-k}}(g)) \right] \right\|_{l^2(j\in\N)} \right\|_{l^2(k\in\N)} \right\|_{L^t}$$
$$
 \lesssim \left\| \left\| 2^{ks} k^{1/2}  \left\| T_j(\overline{\psi}_{2^{-k}}(f),\overline{\psi}_{2^{-k}}(g)) \right\|_{l^2(j\in\N)} \right\|_{l^2(k\in\N)} \right\|_{L^t}.$$
We linearize in $j$ as before and using the fact that $k^{1/2} \lesssim 2^{ks}$ (as $s>0$),
$$\left\|\| 2^{ks} (2)_k \|_{l^2(k)} \right\|_{L^t}  \lesssim \left\| \left\| \left\| \sum_{j} r_{j}(\omega)  T_j(2^{ks}\overline{\psi}_{2^{-k}}(f),2^{ks}\overline{\psi}_{2^{-k}}(g)) \right\|_{L^1(\omega\in[0,1])} \right\|_{l^2(k\in\N)}  \right\|_{L^t}.$$
$$ \lesssim \left\| \left\|  \left\| \sum_{j} r_{j}(\omega)  T_j(2^{ks}\overline{\psi}_{2^{-k}}(f),2^{ks}\overline{\psi}_{2^{-k}}(g)) \right\|_{l^2(k\in\N)}  \right\|_{L^t} \right\|_{L^1(\omega\in[0,1])}.$$
For each $\omega\in[0,1]$, we can invoke a vector valued result for bilinear operators of Grafakos and Martell (\cite{GM}). More precisely, as explained in the first point, for each $\omega \in [0,1]$ the bilinear operator $(f,g) \to \sum_{j} r_{j}(\omega)  T_j(2^{ks}\overline{\psi}_{2^{-k}}(f),2^{ks}\overline{\psi}_{2^{-k}}(g)) $ is bounded from $L^p \times L^q$ to $L^t$ (since it is associated to a $x$-independent symbol). Then, Theorem 9.1 in \cite{GM} implies  that the operator admits an $l^2$-valued bilinear extension, which yields 
$$\left\|\| 2^{ks} (2)_k \|_{l^2(k\in\N)} \right\|_{L^t}  \lesssim \left\| \left\|\left\|2^{ks}\overline{\psi}_{2^{-k}}(f) \right\|_{l^2(k\in\N)} \right\|_{L^p} \left\| \left\| 2^{ks}\overline{\psi}_{2^{-k}}(g) \right\|_{l^2(k\in\N)}  \right\|_{L^q} \right\|_{L^1(\omega)},$$
with estimates uniformly in $\omega\in [0,1]$. This concludes the proof of the case  $(2)_k$.

\gb $3)$ The sum with $(3)_k$. \\
The analysis in this case  is entirely analogous as  the case $(1)_k$ and so we leave the details to the reader.

\gb
{\bf Estimate for  $II$.}
In this case, we decompose the term $ II_k(f,g)$ with quantities appearing as a linear combinaison of terms of the following form 
$$ (1)_k=  \sum_{j \geq k-2} \Psi_{2^{-k}}\left[n_{j,j} \widetilde{\phi}_{2^{-j}}\left[ T_j(f,g) \right]\right], $$
or
$$ (2)_k=  \sum_{j \geq k-2}  \sum_{l \geq j} \Psi_{2^{-j}}\left[ n_{j,l}(x)\widetilde{\psi}_{2^{-l}}\left[ T_j(f,g) \right] \right].  $$
Indeed with a similar reasoning as before and  since $j\geq k-2$, the general quantity in $II_k$ has a non-vanishing contribution only if the frequency variables of $m_j(2^j\cdot)$ or $T_j(f,g)$ are contained in $\{|\xi|\lesssim 2^j\}$ (which corresponds to $(1)_k$) or if the two frequency variables are contained in $\{|\xi|\simeq 2^l\}$ for some $l\geq j$ (which corresponds to $(2)_k$).

The study of $(2)_k$ is similar to the one of $(1)_k$ with the help of fast decays in $l$ (see (\ref{prop2})), so we only write the proof for $(1)_k$.
By the estimates on $n_{j,j}$, we have
$$\left\| \| 2^{ks}(1)_k \|_{l^2(k\in\N)} \right\|_{L^t} \lesssim \left\| \left\| \sum_{j\geq k-2} 2^{(k-j)s}  2^{js} M^2\left[ T_j(f,g) \right] \right\|_{l^2(k\in\N)} \right\|_{L^t}.$$
Using $s>0$ and Young's inequality for the $l^2$-norm on $k$, we get the following bound
$$ \left\| \| 2^{js} M^2\left[ T_j(f,g) \right] \|_{l^2(j\in\N)} \right\|_{L^t}.$$
We have already studied such quantities in the first case (see (\ref{eq1})) and prove the appropriate bounds. \\
The proof of the theorem in now complete.
\end{proof}

\begin{rem}
Since $\sigma(x,\xi,\eta)=\tau(x,\xi-\eta)$ is bounded, the function $T_\sigma(1,1)$  (rigorously defined in \cite{t1bilineaire}) is given by 
$$ T_\sigma(1,1)= \sigma(.,0,0) \in L^\infty \subset BMO.$$ 
If the transposes of  $T_\sigma$ are also given by symbols in the classes $BS^0_{1,1;\theta}$ or even by some bounded functions, then we can use the  bilinear $T(1)$-Theorem of \cite{t1bilineaire} (since $T_\sigma$ is modulation invariant) to conclude that  $T$ is bounded on the product of Lebesgue spaces.  The counterexample of the previous section shows that this is not always the case, so the classes $BS^0_{1,1;\theta}$ cannot be closed by transposition. As mentioned in the introduction the smaller classes $BS^0_{1,0;\theta}$  are.
\end{rem}

\subsection{The general case.}

In this subsection, we consider general symbols in the class $BS^0_{1,1;\pi/4}$. We obtain a slightly less general result than the one in the previous case.

\begin{thm} \label{thm2} If $\sigma\in BS_{1,1;\pi/4}$ and $s>1/2$, then the bilinear operator $T_\sigma$ is bounded from $W^{s,p} \times W^{s,q}$ into $W^{s,t}$ for all exponents $1<p,q,t<\infty$ satisfying the H\"older condition \eqref{holder}.
\end{thm}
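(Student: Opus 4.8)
The plan is to follow the same architecture as the proof of Theorem~\ref{thm1}, reducing a general symbol $\sigma \in BS^0_{1,1;\pi/4}$ to elementary pieces and splitting the Littlewood--Paley analysis of $T_\sigma(f,g)$ into the ``low output frequency'' part $I_k$ and the ``high output frequency'' part $II_k$. The crucial difference is that now, in the reduction of $\sigma$, the multiplier factor $\widehat\psi(2^{-j}(\xi-\eta))$ coming from the modulation-invariant case is replaced by a genuine bilinear symbol $\Theta_j(x,\xi,\eta)$ localized where $|\xi-\eta|\approx 2^j$, but which also carries $x$-dependence at frequency scale $2^j$. So the building blocks are operators $T_{\Theta_j}$ associated with $x$-independent-in-scale symbols in $BS^0_{1,0;\pi/4}$ after rescaling; the key fact I will use is that after linearization (Rademacher functions) the symbols $\sum_j r_j(\omega) 2^{js}\Theta_j$ still belong to a class to which the boundedness results of \cite{bipseudo}, \cite{pseudo} and the vector-valued result of Grafakos--Martell \cite{GM} apply.

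The three model sums $(1)_k$, $(2)_k$, $(3)_k$ in the estimate for $I_k$ are handled exactly as before, with one exception: in $(2)_k$, the bound on the number of terms in the inner sum over $j \le k-2$ forces the factor $k^{1/2}$ from Cauchy--Schwarz, and in Theorem~\ref{thm1} this was absorbed using $k^{1/2}\lesssim 2^{ks}$, valid for \emph{any} $s>0$. Here the obstruction is that the $x$-dependence of the symbol at scale $2^j$ no longer matches the output frequency scale $2^k$ cleanly: when we commute the localization $\widetilde\psi_{2^{-k}}$ past $T_{\Theta_j}$ and try to re-localize the inputs at frequency $2^k$ as in the modulation-invariant case, the $x$-frequency of the symbol (of size up to $2^j \le 2^k$) spreads the output and we pick up an extra loss. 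Quantifying this, I expect to lose a factor of order $2^{j/2}$ relative to $2^{k/2}$ — equivalently, the clean gain $2^{(j-k)\cdot(\text{positive})}$ that made the $l^2(k)$-sum converge for all $s>0$ is now only $2^{(j-k)\cdot(s-1/2)}$, so summability over $k \ge j$ in $l^2$ requires $s > 1/2$. This is the one and only place where the stronger hypothesis enters.

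Concretely, for $(2)_k$ I would write
$$\widetilde\psi_{2^{-k}}\big[T_{\Theta_j}(f,g)\big](x)
= \int \widehat{\widetilde\psi}\big(2^{-k}(\xi+\eta+\zeta)\big)\,\widehat{a_j}(\zeta)\,\theta(2^{-j}(\xi-\eta))\,\widehat f(\xi)\widehat g(\eta)\, e^{ix(\xi+\eta+\zeta)}\, d\xi\, d\eta\, d\zeta,$$
where $a_j(\cdot)=m_j(2^j\cdot)$ has frequency support in $|\zeta|\lesssim 2^j$; the constraint $|\xi+\eta+\zeta|\approx 2^k$ together with $|\xi-\eta|\approx 2^j \ll 2^k$ and $|\zeta|\lesssim 2^j \ll 2^k$ still forces $|\xi|\approx|\eta|\approx 2^k$, so the re-localization of $f,g$ at frequency $2^k$ via an auxiliary $\overline\psi_{2^{-k}}$ goes through. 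The price is only that we no longer get decay in $(j-k)$ from the symbol itself, so the $l^2(k)$ summation of the $k^{1/2}2^{ks}$ weighted terms against $2^{js}$ must rely entirely on $2^{(j-k)s}k^{1/2}$ being $l^2$-summable in $k\ge j$, which needs $s>1/2$ (with a logarithmic endpoint failing at $s=1/2$, consistent with the statement). After this, linearize in $j$, apply Fubini, use that for each $\omega$ the operator with $x$-independent symbol $\sum_j r_j(\omega)2^{js}\theta(2^{-j}(\xi-\eta))\in BS^s_{1,0;\pi/4}$ is bounded $W^{s,p}\times W^{s,q}\to L^t$ (equivalently $L^p\times L^q\to L^t$ after absorbing the $2^{ks}$ weights into $\overline\psi_{2^{-k}}f$, $\overline\psi_{2^{-k}}g$), and invoke Grafakos--Martell \cite{GM} for the $l^2$-valued extension.

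The remaining pieces require no new ideas. The sums $(1)_k$ and $(3)_k$ in $I_k$, and the two model sums in $II_k$, all carry the fast decay $2^{(j-h)r}$ from \eqref{prop2} with $r$ arbitrarily large, so Young's inequality in $l^2$ and the Fefferman--Stein vector-valued maximal inequality \cite{FS}, followed by the same Rademacher linearization and the $x$-independent boundedness plus \cite{GM}, close those cases for all $s>0$ exactly as in Theorem~\ref{thm1}. Thus the proof reduces to reproducing the scheme of Theorem~\ref{thm1} verbatim except in $(2)_k$, where the missing symbol-decay costs the restriction $s>1/2$. The main obstacle is precisely this book-keeping in $(2)_k$: making sure that the $x$-frequency spread of the general symbol does not destroy the input re-localization at scale $2^k$, and extracting exactly the threshold $s>1/2$ from the resulting $l^2(k)$ sum.
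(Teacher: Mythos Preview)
Your reduction to elementary symbols is where the argument breaks down. The building blocks you write down in the displayed formula for $(2)_k$ are
\[
T_{\Theta_j}(f,g)(x)=a_j(x)\int \theta\big(2^{-j}(\xi-\eta)\big)\widehat f(\xi)\widehat g(\eta)e^{ix(\xi+\eta)}\,d\xi\,d\eta,
\qquad a_j(\cdot)=m_j(2^j\cdot),
\]
i.e.\ a tensor product $m_j(2^jx)\,\theta(2^{-j}(\xi-\eta))$. That is precisely the modulation-invariant reduction of Theorem~\ref{thm1}, and it is \emph{not} available for a general $\sigma\in BS^0_{1,1;\pi/4}$: once you localize $\sigma$ to $|\xi-\eta|\approx 2^j$, the resulting piece still depends nontrivially on $\xi+\eta$ and is not compactly supported in $(\xi,\eta)$, so you cannot run a Fourier-series expansion to separate the $x$-variable from $(\xi,\eta)$. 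Your later claim that $\sum_j r_j(\omega)2^{js}\Theta_j$ lies in a class covered by \cite{bipseudo}, \cite{pseudo} is either circular (if the $\Theta_j$ are genuinely $x$-dependent, the sum lands back in the class you are trying to handle) or presupposes the tensor structure that does not exist here.

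The paper's fix is to introduce a second localization parameter $l\in\Z$ in the direction $\xi+\eta$, reducing to elementary symbols
\[
m_{j,l}(2^jx)\,\widehat\Psi\big(2^{-j}(\xi-\eta)\big)\,\widehat\Psi\big(l+2^{-j}(\xi+\eta)\big),
\]
which \emph{are} compactly supported in $(\xi,\eta)$ on balls of radius $\approx 2^j$, allowing the Fourier-series separation. The operators $T_{j,l}$ are then $x$-independent. At output frequency $|\xi+\eta|\approx 2^k$ the constraint $|\xi+\eta+l2^j|\approx 2^j$ leaves roughly $2^{k-j}$ admissible values of $l$, and linearizing (Cauchy--Schwarz) in $l$ produces an extra factor $2^{(k-j)/2}$. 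In $(1)_k$ and $(3)_k$ this is harmless because $r$ can absorb it ($r>s+1/2$), but in $(2)_k$ it combines with the factor $k^{1/2}$ to force $2^{k/2}k^{1/2}\lesssim 2^{ks}$, hence $s>1/2$. Your proposed mechanism (``$x$-frequency spread'') does not actually generate this loss: in your displayed integral, $|\zeta|\lesssim 2^j\ll 2^k$ is exactly the situation of Theorem~\ref{thm1}, where all $s>0$ suffices, and your $l^2(k)$-summability computation of $2^{(j-k)s}k^{1/2}$ in fact converges for every $s>0$, not just $s>1/2$.
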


\begin{proof} We want to adapt the proof of Theorem \ref{thm1}. We briefly indicate the extra difficulties faced. \\
{\bf Reduction to elementary symbols.} \\
We first reduce the problem to the study of elementary symbols taking the following form
\be{eq:form} \sigma(x,\xi,\eta)= \sum_{\genfrac{}{}{0pt}{}{j\geq 0}{l\in \Z}} m_{j,l}(2^j x)  \widehat{\Psi} (2^{-j} (\xi-\eta)) \widehat{\Psi} (l+2^{-j} (+\xi+\eta). \ee
Let us give a sketch of such reduction. By multiplying the symbol $\sigma$ with $\widehat{\Psi} (2^{-j} (\xi-\eta)) \widehat{\Psi} (l+2^{-j} (\xi+\eta)$, we  localize it  in the frequency to the following domain 
$$ \{(\xi,\eta), |\xi-\eta|\simeq 2^j \textrm{  and  } |\xi+\eta+l2^j| \simeq 2^j,$$
which can be compared to a ball of radius $2^j$. This compactly supported symbols  $\sigma_{j,l}$ satisfy
$$  |\partial^\alpha_x\partial_{\xi,\eta}^\beta \sigma_{j,l} (x, \xi ,\eta)|\leq C_{\alpha \beta}  2^{j(\alpha -\beta)}.$$
As usually, we decompose this symbol in Fourier series, obtaining
$$ \sigma_{j,l}(x,\xi,\eta) = \sum_{a,b\in\Z^2} \gamma_{a,b}(x) e^{i(a.\xi+b.\eta)} \widehat{\Psi} (2^{-j} (\xi-\eta)) \widehat{\Psi} (l+2^{-j} (+\xi+\eta).$$
The modulation term $e^{i(a.\xi+b.\eta)}$ does not play a role, as it corresponds to translation in physical space (which does not modify the Lebesgue norms), it remains  for us to check that the coefficients $\gamma_{a,b}$ are fast decreasing in $(a,b)$ and satisfies the desired smoothness in $x$. To do so, we remark that for $\alpha\in\N$ integrations by parts yields
\begin{align*}
 \left|\partial_x^\alpha \gamma_{a,b}(2^{-j} x)\right| & \lesssim 2^{-j\alpha} (2^j)^{-2} \left|\int\int e^{-i(a.\xi+b.\eta)} \partial_x^\alpha \sigma_{j,l} (2^{-j}x,\xi,\eta) d\xi d\eta \right| \\
& \lesssim 2^{-j\alpha} (2^j)^{-2} \left(1+|a|+|b|\right)^{-M} \\
& \hspace{1cm}\left|\int\int e^{-i(a.\xi+b.\eta)} \left(1+\partial_\xi^M+\partial_\eta^M\right) \partial_x^\alpha \sigma_{j,l} (2^{-j} x,\xi,\eta) d\xi d\eta \right| \\
& \lesssim \left(1+|a|+|b|\right)^{-M},
\end{align*}
where $M$ is an integer  that can be chosen as large as we wish. So we conclude that the functions $\gamma_{a,b}(2^{-j}\cdot)$ are uniformly bounded in $C^{r}$ (for $r$ arbitrarily large) with fast decays in $(a,b)$. This operation (expansion in Fourier series) allows us to reduce the study of $\sigma$ to reduced symbols taking the form (\ref{eq:form}).

\mb
{\bf Study of elementary symbols.} \\
We adapt the proof of Theorem \ref{thm1} and use the same notation.
We have to study the sum
\be{somme} 
\sum_{\genfrac{}{}{0pt}{}{j\geq 0}{l\in \Z}} m_{j,l}(2^j x) T_{j,l}(f,g),
 \ee
where $T_{j,l}$ is the bilinear operator associated to the $x$-independent symbol
$$
 \widehat{\Psi} (2^{-j} (\xi-\eta)) \widehat{\Psi} (l+2^{-j} (\xi+\eta)).
 $$
We can proceed as in the modulation invariant case and consider the different cases, eventually arriving to the point where we need to linearize with respect to the parameter $j$.  But now,  we also have to linearize according to the new parameter $l$. When we estimate the square  function of $T_{j,l}$, we  have to study $\Psi_{2^{-k}}(T_{j,l}(f,g))$ and we are interested only in the indices $j,l$ satisfying $|\xi+\eta|\approx 2^{k}$ with $|\xi-\eta|\approx 2^{j}$ and $|\xi+\eta+l2^j|\approx 2^{j}$.  However, due to the use of the Cauchy-Schwartz inequality in $l$, we will have an extra term bounded by $2^{(k-j)/2}$, which corresponds to the square root of the number of indices $l$ satisfying all these conditions. For the study of $(1)_k$ and $(3)_k$ there is no problem, since  $r$ can be chosen satisfying $r>s+1/2$. However, for the study of $(2)_k$ we will need $2^{k(s+1/2)}k^{1/2} \leq 2^{ks} 2^{ks}$ and so we need to assume that $s>1/2$.
\end{proof}

\begin{rem} It is interesting to note that without the modulation invariance, an extra exponent $1/2$ appears. We do not know if our result is optimal or not. Moreover, unlike the modulation invariance case, we  also do not know whether a general  operator $T_\sigma$ with symbol $\sigma \in BS_{1,1;\pi/4}$,  and whose two adjoints satisfy similar assumptions, is  bounded on product of Lebesgue spaces.  To address this question, it would be interesting to obtained (if possible) a  $T(1)$-Theorem as in \cite{t1bilineaire} but without assuming modulation invariance.
\end{rem}

\section{An improvement on paramultiplication. }\label{paramulti}

In this section, we will use $x$-independent symbols in $BS_{1,1;\pi/4}$ (and also in the smaller class $BS_{1,0;\pi/4}$) to describe a new paramultiplication operation. We will  obtain an improvement over the classical paramultiplication first studied by  Bony in \cite{bony} in the $L^2$ setting and extended by  Meyer in \cite{meyer1,meyer2} to $L^p$ norms. The classical paraproducts and their properties hold for multidimensional variables, however our improvement works (at least at this moment) only in the  one dimensional case. \\
We start with the classical definition.
\begin{df} Let $f$ and $b$ be two smooth functions and let $\Phi$ and $\Psi$ be as in (\ref{calderon}) and  (\ref{sobolev}). We assume that for all $\eta \in {\rm supp}\,\widehat \Phi$ and $\xi\in {\rm supp}\,\widehat \Psi$ we have
$$ |\eta| \leq \frac{1}{2} |\xi|.$$
Then the paramultiplication by $b$ is defined as follows
$$ \Pi_b(f) := \sum_{k\in \Z} \Phi_{2^k}(f) \Psi_{2^k}(b).$$
\end{df}
The operator $(b,f)\to \Pi_b(f)$ can essentially be thought as a bilinear multiplier whose symbol is a  smooth decomposition of the characteristic function  of the  cone  in Figure~\ref{fig1}.

\begin{figure}
 \psfragscanon
\psfrag{xi1}[l]{$\eta$}
\psfrag{xi2}[l]{ $\xi$}
\psfrag{diag}[l]{$\xi=\eta$}
\psfrag{diag2}[l]{ $\xi=-\eta$}
\includegraphics[width=0.4\textwidth]{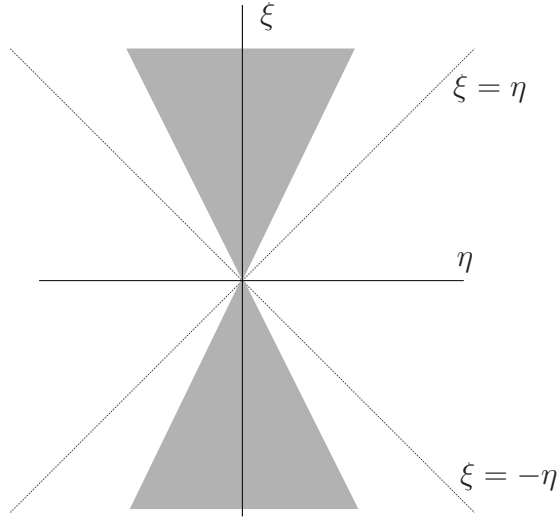}
\caption{Support of the bilinear symbol associated to the paraproduct $\Pi$.}
\label{fig1}
\end{figure}

\mb The following two propositions are well-known properties for paraproducts (see e.g. Theorems 2.1 and 2.5 in \cite{bony} for the originel results involving $L^2$-Sobolev spaces and \cite{meyer1,meyer2} for extension to other Sobolev spaces): 

\begin{prop} \label{prop:conti} For all $s>0$ and $p\in (1,\infty)$ the linear operator 
$\Pi_b$ is bounded on the Sobolev space $W^{s,p}$, satisfies
$$ \left\| \Pi_b \right\|_{W^{s,p} \to W^{s,p}} \lesssim \left\| b \right\|_{L^\infty},$$
and  the operation can be extended  to an $L^\infty$ function $b$.
\end{prop}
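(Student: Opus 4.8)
The plan is to establish the bound $\|\Pi_b(f)\|_{W^{s,p}} \lesssim \|b\|_{L^\infty}\|f\|_{W^{s,p}}$ via the Littlewood--Paley square function characterization \eqref{sobolev}, treating $\Pi_b$ as the bilinear multiplier whose symbol is supported in the cone of Figure~\ref{fig1}. First I would recall that, by the spectral support conditions imposed in the definition (for $\eta\in\mathrm{supp}\,\widehat\Phi$, $\xi\in\mathrm{supp}\,\widehat\Psi$ one has $|\eta|\le\frac12|\xi|$), each summand $\Phi_{2^k}(f)\Psi_{2^k}(b)$ has spectrum contained in an annulus $\{|\zeta|\approx 2^{-k}\}$: indeed the spectrum of the product lies in the Minkowski sum of $\mathrm{supp}\,\widehat{\Phi_{2^k}}$ and $\mathrm{supp}\,\widehat{\Psi_{2^k}}$, and the first is small compared to the second so the sum stays in a corona of size $2^{-k}$. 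Consequently, applying $\Psi_{2^{-m}}\ast$ to $\Pi_b(f)$ only picks up the finitely many terms with $k\approx m$, and the square function of $\Pi_b(f)$ at scale $m$ is controlled pointwise by a bounded number of terms of the form $|\Phi_{2^m}(f)|\,|\Psi_{2^m}(b)|$.

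Next I would bound $|\Psi_{2^m}(b)| \le C\,M(b) \lesssim \|b\|_{L^\infty}$ pointwise (the Littlewood--Paley pieces of an $L^\infty$ function are uniformly bounded), and $|\Phi_{2^m}(f)| \lesssim M(S_m f)$ where $S_m f$ denotes a smoothed partial sum; more precisely I would replace $\Phi_{2^m}$ by a slightly fattened Littlewood--Paley projection $\widetilde\Psi_{2^m}$ with the same spectral localization so that $|\Phi_{2^m}(f)(x)| \lesssim M(\widetilde\Psi_{2^m} f)(x)$. Putting these together gives
\begin{equation*}
\Big(\sum_{m} 2^{2ms}\big|\Psi_{2^{-m}}\ast \Pi_b(f)\big|^2\Big)^{1/2}
\lesssim \|b\|_{L^\infty}\Big(\sum_{m} 2^{2ms}\big|M(\widetilde\Psi_{2^m} f)\big|^2\Big)^{1/2}.
\end{equation*}
Taking the $L^p$ norm and invoking the Fefferman--Stein vector-valued maximal inequality (as used repeatedly in the proof of Theorem~\ref{thm1}, see \cite{FS}) yields the bound by $\|b\|_{L^\infty}\big\|(\sum_m 2^{2ms}|\widetilde\Psi_{2^m} f|^2)^{1/2}\big\|_{L^p}\approx \|b\|_{L^\infty}\|f\|_{W^{s,p}}$, using \eqref{sobolev} again. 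The low-frequency term $\Phi\ast\Pi_b(f)$ is handled the same way (it is in fact easier, as it only involves finitely many $k$).

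The extension to $b\in L^\infty$ is then routine: the estimate just proved is uniform over smooth $b$, so for general $b\in L^\infty$ one approximates by smooth $b_\varepsilon$ with $\|b_\varepsilon\|_{L^\infty}\le\|b\|_{L^\infty}$ and $b_\varepsilon\to b$ in an appropriate weak sense, and passes to the limit using the continuity of $T_\sigma:\cs\times\cs\to\csp$ noted in the introduction together with the uniform bound to conclude $\Pi_b$ extends to a bounded operator on $W^{s,p}$ with the stated norm control. The only genuinely delicate point is the bookkeeping of spectral supports that guarantees each product $\Phi_{2^k}(f)\Psi_{2^k}(b)$ sits in a corona $\{|\zeta|\approx 2^{-k}\}$ rather than a ball --- this is exactly what the hypothesis $|\eta|\le\frac12|\xi|$ in the definition buys us, and it is what makes the square function of $\Pi_b(f)$ diagonal in the scale parameter; everything else is a standard paraproduct argument of the type already deployed for the classes $BS^0_{1,1;\pi/4}$ earlier in the paper.
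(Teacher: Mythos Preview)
The paper does not prove this proposition; the sentence immediately preceding it states that Propositions~\ref{prop:conti} and~\ref{prop:error} are ``well-known properties for paraproducts'' and refers the reader to \cite{bony,meyer1,meyer2}. There is therefore no proof in the paper to compare your argument against.

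Your argument itself, however, has a genuine gap. The step in which you claim $|\Phi_{2^m}(f)(x)|\lesssim M(\widetilde\Psi_{2^m}f)(x)$ is false: $\Phi_{2^m}$ is a low-pass filter with Fourier support in a ball, while any Littlewood--Paley piece $\widetilde\Psi_{2^m}$ has Fourier support in an annulus. If $\widehat f$ is supported near the origin then $\Phi_{2^m}(f)\equiv f$ for all large $m$ whereas $\widetilde\Psi_{2^m}f\equiv 0$, so no such pointwise domination is possible. Without this step your displayed square-function inequality is unsupported, and indeed the quantity $\big\|\big(\sum_m 2^{2ms}|\Phi_{2^{-m}}(f)|^2\big)^{1/2}\big\|_{L^p}$ is typically infinite when $s>0$.

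This is not merely a technical slip. With the definition exactly as printed, $\Pi_b(f)=\sum_k \Phi_{2^k}(f)\,\Psi_{2^k}(b)$, the Sobolev weight $2^{ms}$ lands on the high-frequency factor $\Psi_{2^m}(b)$, and there is no honest way to transfer it to $f$ while spending only $\|b\|_{L^\infty}$: taking $b(x)=e^{iNx}$ and a fixed low-frequency Schwartz $f$ gives $\|\Pi_b(f)\|_{W^{s,p}}\approx N^s\|f\|_{L^p}$. The classical result being cited (Bony, Meyer) uses the opposite convention $\Pi_b(f)=\sum_k \Phi_{2^k}(b)\,\Psi_{2^k}(f)$; with \emph{that} convention your overall scheme is exactly the standard proof --- bound $|\Phi_{2^k}(b)|\lesssim\|b\|_{L^\infty}$ pointwise and what remains is the Littlewood--Paley square function of $f$. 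So either the roles of $b$ and $f$ in the paper's definition are inadvertently swapped, or the statement requires an argument different from the one you sketch; in either case the step you wrote does not go through.
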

The paramultiplication approximates the pointwise multiplication is the following sence.

\begin{prop} \label{prop:error} Let $1< t < \infty$ and $s>1/t$. For $f\in W^{s,t}$ and $g\in W^{s,t}$,  we have
$$ \left\| fg - \Pi_f(g)-\Pi_g(f) \right\|_{W^{2s-1/t,t}} \lesssim \left\| f \right\|_{W^{s,t}} \left\| g \right\|_{W^{s,t}}.$$
\end{prop}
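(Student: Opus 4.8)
The plan is to decompose the classical remainder $R(f,g) := fg - \Pi_f(g) - \Pi_g(f)$ into the ``diagonal'' piece of the product and to identify it with a bilinear pseudodifferential operator whose symbol lies in the exotic class $BS^0_{1,1;\pi/4}$ (in fact the smaller $BS^0_{1,0;\pi/4}$ suffices here). Concretely, writing $S_k := \Phi_{2^k} + \sum_{m < k} \Psi_{2^m}$ for the partial sum projections and $\Delta_k := \Psi_{2^k}$ for the Littlewood-Paley pieces, the Bony decomposition gives $fg = \Pi_f(g) + \Pi_g(f) + \sum_{|k-k'|\le N_0} \Delta_k(f)\Delta_{k'}(g)$, so that $R(f,g)$ is a finite sum over the shift $k-k' = r$, $|r| \le N_0$, of terms $\sum_k \Delta_k(f)\Delta_{k+r}(g)$. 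For each such term the associated bilinear symbol is $\sum_k \widehat{\Psi}(2^{-k}\xi)\widehat{\Psi}(2^{-k-r}\eta)$, which is supported where $|\xi| \approx |\eta| \approx |\xi-\eta|$ and is readily checked (after a smooth localization absorbing the factor $1+|\xi-\eta|$) to satisfy the estimates defining $BS^0_{1,0;\pi/4}$ when multiplied by a suitable symbol of order $-1$; equivalently, $R(f,g) = T_\sigma(D^{-s}f, D^{-s}g)$ for an appropriate $\sigma$ after distributing derivatives. The point is that these diagonal symbols decay in $|\xi-\eta|$, which is exactly the geometry of the $\theta=\pi/4$ classes.

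Next I would transfer the desired estimate $\|R(f,g)\|_{W^{2s-1/t,t}} \lesssim \|f\|_{W^{s,t}}\|g\|_{W^{s,t}}$ into an estimate of the form $T: W^{s,t}\times W^{s,t} \to W^{s',t}$ with $s' = 2s - 1/t$. Here the index relation $1/t + 1/t = 2/t$ (Hölder with $p=q=t$) and the target smoothness $s' < s$ are the crucial features: because on each diagonal block the output frequency $\xi+\eta$ is \emph{also} of size $\approx 2^k$ (both inputs and the sum live at scale $2^k$), the operator genuinely gains regularity. The most efficient route is to run the square-function computation directly: with $\Psi_{2^{-n}}\ast R(f,g)$ essentially picking out the block $k \approx n$, one writes
\begin{align*}
\left\| \left( \sum_n 2^{2ns'} |\Psi_{2^{-n}}\ast R(f,g)|^2 \right)^{1/2} \right\|_{L^t}
&\lesssim \left\| \left( \sum_k 2^{2ks'} |\Delta_k f|^2 |\Delta_{k+r} g|^2 \right)^{1/2} \right\|_{L^t} \\
&\lesssim \left\| \sup_k \left( 2^{k/t}|\Delta_k g| \right) \left( \sum_k 2^{2k(s'-1/t)} 2^{2k/t}|\Delta_k f|^2 \right)^{1/2} \right\|_{L^t},
\end{align*}
after which one uses $s' - 1/t = 2s - 2/t \le$ (the exponent needed), Hölder in $L^t$ splitting as $L^\infty$-in-frequency-sup times the square function, and the Sobolev embedding $W^{s,t}\hookrightarrow$ (bounded sup of $2^{k/t}\Delta_k$) valid precisely when $s > 1/t$. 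The gain of $1/t$ comes from estimating one of the two factors $\Delta_{k+r}g$ pointwise via $2^{-k/t}$ times a maximal function controlled by $\|g\|_{W^{s,t}}$ since $s>1/t$, and keeping the square function on the other factor.

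Alternatively, and perhaps more in the spirit of the paper, one can simply invoke the machinery already built: the diagonal remainder is $T_\sigma$ for $\sigma$ in (a renormalization of) $BS^0_{1,0;\pi/4}$, and bilinear operators with $x$-independent symbols in $BS^s_{1,0;\pi/4}$ are bounded $W^{s,p}\times W^{s,q}\to L^t$ by the results of \cite{bipseudo}, \cite{pseudo} cited in the proof of Theorem \ref{thm1}; combining this with the extra smoothness coming from the diagonal support (which upgrades the $L^t$ target to $W^{2s-1/t,t}$) finishes the argument. I expect the main obstacle to be bookkeeping the precise amount of regularity gained: one must verify that the diagonal geometry $|\xi+\eta|\approx|\xi|\approx|\eta|$ yields exactly the exponent $2s - 1/t$ and not something worse, and that the Sobolev embedding $W^{s,t}\hookrightarrow L^\infty_{\mathrm{loc}}$-type bound used on one factor is available in the endpoint-adjacent regime $s>1/t$; this is where the hypothesis $s > 1/t$ (rather than $s>0$) is genuinely used, and getting the constants and the $\ell^2$-summations to close without loss is the delicate part.
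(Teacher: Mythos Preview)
The paper does not prove this proposition; it is quoted as a classical result of Bony \cite{bony} and Meyer \cite{meyer1,meyer2}, with only the remark that the gain comes from the error term being frequency-supported on the cone $\{|\xi|\approx|\eta|\}$.

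Your proposal contains a genuine error: you have confused the geometry of the classical remainder with that of the paper's \emph{new} remainder in Proposition~\ref{prop:produit}. The symbol of the classical $R(f,g)$ is supported on the cone $\{|\xi|\approx|\eta|\}$, which contains the whole diagonal $\{\xi=\eta\}$; there $|\xi-\eta|$ can be arbitrarily small while $|\xi|$ is large, so the estimates $|\partial^\alpha_{\xi,\eta}\sigma|\lesssim(1+|\xi-\eta|)^{-|\alpha|}$ defining $BS^0_{1,0;\pi/4}$ fail outright, and your claim that the support satisfies $|\xi-\eta|\approx|\xi|$ is false. The classical remainder is a Coifman--Meyer $BS^0_{1,0}$ symbol, not a $\theta=\pi/4$ symbol. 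The entire point of Section~\ref{paramulti} is that the new paraproduct $\widetilde\Pi$ produces a remainder supported on a \emph{strip} of bounded width rather than a cone, and it is exactly this strip geometry that yields the improved exponent $2s$ in Proposition~\ref{prop:produit}; you cannot import it here, and the ``alternative'' route via the machinery of \cite{bipseudo,pseudo} is a dead end for the same reason.

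A second issue: you claim that on each diagonal block the output frequency $\xi+\eta$ is also of size $\approx 2^k$. It is not---the spectrum of $\Delta_k f\cdot\Delta_{k+r}g$ is the ball $\{|\omega|\lesssim 2^k\}$, since near $\xi=-\eta$ the sum is small. Hence $\Psi_{2^{-n}}\ast R(f,g)$ receives contributions from all $k\gtrsim n$, not just $k\approx n$, and your first displayed inequality requires the standard almost-orthogonality lemma for functions with spectra in balls (valid only for positive target smoothness) rather than the localization you assert. The square-function computation can be salvaged along these lines, followed by the embedding $W^{s,t}\hookrightarrow B^{s-1/t}_{\infty,\infty}\subset L^\infty$ on one factor (this is where $s>1/t$ enters and where the loss of $1/t$ in the exponent comes from); but as written neither of your two proposed mechanisms is correct.
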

The exponent of regularity $2s-\frac{1}{t}$  is bigger than $s$ for $ts>1$. 
This gain is very important. The result is essentialy due to the fact that, in the frequency space, the error term has only a contribution from $f$ and $g$ when  
$$ \left\{ |\xi| \approx |\eta| \right\},$$
i.e.  in a cone along the two main diagonals.

\mb Using the new bilinear operators (whose singularities are localized on a line in the frequency plane), we can define a new paramultiplication operation $\widetilde{\Pi}$ such that the error term will be  concentrated in the frequency plane  exactly in a strip (of fixed width) around the two diagonals. In this way, we will be able to get a better gain for the exponent of regularity. 

\begin{df} \label{def:para} Let $\Theta$ be a smooth function on $\R$ such that its Fourier transform $\widehat{\Theta}$ satisfies
 $$ \omega\geq 2 \Longrightarrow \widehat{\Theta}(\omega)=1 \qquad -\infty<\omega\leq 1 \Longrightarrow \widehat{\Theta}(\omega)=0.$$
Then we define for $b,f\in \s(\R)$, the {\it improved paramultiplication} by $b$ (written $\widetilde{\Pi}_b(f)$) by
\begin{equation}
\label{para}
 \widetilde{\Pi}_b(f)(x)=
\int_{\R^2} e^{ix(\xi+\eta)} \widehat{b}(\xi) \widehat{f}(\eta) \left[\widehat{\Theta}(\xi-\eta) \widehat{\Theta}(\xi+\eta) + \widehat{\Theta}(\eta-\xi) \widehat{\Theta}(-\xi-\eta) \right] d\xi d\eta. 
\end{equation}
\end{df}

\mb The new bilinear multiplier $(b,f)\to \widetilde{\Pi}_b(f)$ is associated to a bilinear symbol, corresponding to a smooth version of the characteristic function of the region in Figure~\ref{fig2}. We remark  that this new region approximates  the domain $\{(\xi,\eta), |\xi|\geq |\eta|\}$
better than the region in Figure~\ref{fig1}.

\begin{figure}
 \psfragscanon
\psfrag{xi1}[l]{$\eta$}
\psfrag{xi2}[l]{ $\xi$}
\psfrag{diag}[l]{$\xi=\eta$}
\psfrag{diag2}[l]{ $\xi=-\eta$}
\includegraphics[width=0.4\textwidth]{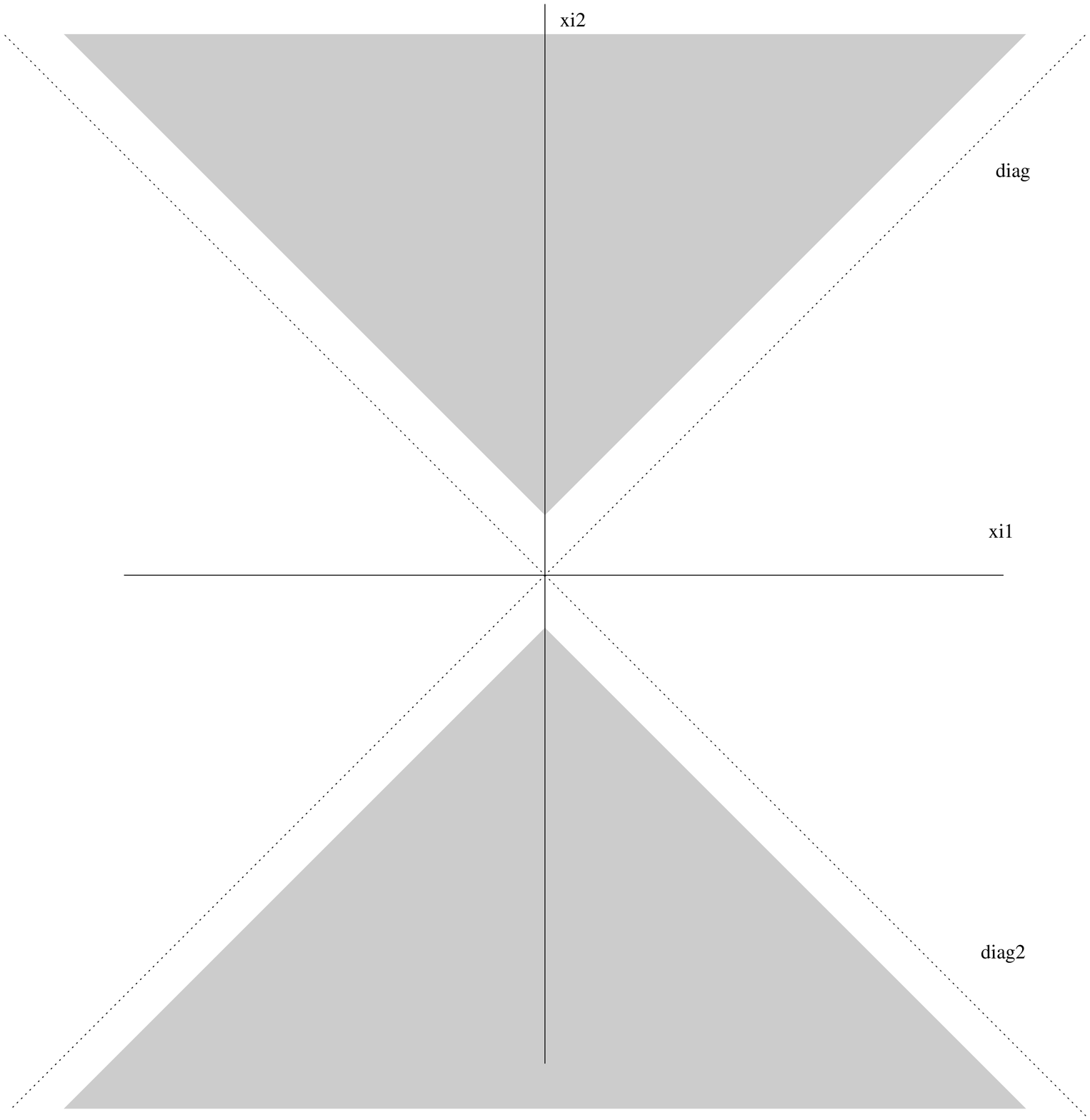}
\caption{Support of the bilinear symbol associated to the new paraproduct $\widetilde{\Pi}$.}
\label{fig2}
\end{figure}

\mb This new operation satisfies a similar property to the one in Proposition \ref{prop:conti}.

\begin{prop} \label{prop:continuite} Let $s\geq 0$  and  let $1<p,q,t <\infty$ be exponents satisfying (\ref{holder}). For every $\epsilon>0$ and $b\in W^{\epsilon,p}(\R)$, the improved paramultiplication by $b$ is well-defined  and produce a bounded operation from $W^{s,q}$ to $W^{s,t}$. In fact, there exists a constant $C=C(s,\epsilon,p,q,t)$ such that for all functions  $f\in W^{s,q}$,
 $$ \left\| \widetilde{\Pi}_b(f) \right\|_{W^{s,t}} \leq C \|b\|_{W^{\epsilon,p}} \|f\|_{W^{s,q}}.$$
Moreover if $s=0$, the exponent $\epsilon=0$ is allowed.
\end{prop}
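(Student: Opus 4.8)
\textbf{Plan for proving Proposition \ref{prop:continuite}.}
The plan is to recognize the improved paramultiplication $\widetilde{\Pi}_b$ as a bilinear pseudodifferential operator with an $x$-independent symbol lying in one of the classes already studied, and then to quote Theorem \ref{thm1}/\ref{thm2} together with an extra twist to gain the $\epsilon$-derivative on $b$. Concretely, the symbol of $(b,f)\mapsto\widetilde{\Pi}_b(f)$ is
$$m(\xi,\eta)=\widehat{\Theta}(\xi-\eta)\widehat{\Theta}(\xi+\eta)+\widehat{\Theta}(\eta-\xi)\widehat{\Theta}(-\xi-\eta),$$
and on the support of this symbol one has $|\xi|\gtrsim|\xi-\eta|$ together with $|\xi|\gtrsim|\xi+\eta|$, hence $|\xi|\approx|\eta|$ up to bounded corrections, and the singular behavior is confined to a strip of fixed width around the diagonals $\xi=\pm\eta$. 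First I would perform a Littlewood--Paley decomposition of $m$ adapted to the distance to the line $\{\xi=\eta\}$: writing $m=\sum_{j\geq 0}m_j$ where $m_j$ is supported where $|\xi-\eta|\approx 2^j$ (and a low-frequency piece for $j=0$), each $m_j$ is a smooth bump on a ball of radius $\approx 2^j$ in the $(\xi-\eta)$-direction while remaining essentially a fixed smooth function in the $(\xi+\eta)$-direction. Thus $m\in BS^0_{1,0;\pi/4}$ as an $x$-independent symbol, and in fact a bit better because of the extra localization $|\xi+\eta|\lesssim|\xi-\eta|$.

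Next, to see the $W^{\epsilon,p}$ dependence on $b$ rather than $L^\infty$, I would exploit that on the support of $m$ we have $|\xi|\approx|\eta|\approx \max(|\xi-\eta|,|\xi+\eta|)$, so that inserting $(1+|\xi|^2)^{\epsilon/2}(1+|\xi|^2)^{-\epsilon/2}$ converts a factor of $\langle\xi\rangle^{\epsilon}$ acting on $\widehat b$ (i.e.\ a Bessel potential, turning $b\in W^{\epsilon,p}$ into an $L^p$ function) into a factor of $\langle\xi\rangle^{-\epsilon}$ absorbed into the symbol, and $\langle\xi\rangle^{-\epsilon}m(\xi,\eta)$ is still in the same class (indeed it only decays faster). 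Formally: write $\widetilde{\Pi}_b(f)=\widetilde{\Pi}'_{J^{\epsilon}b}(f)$ where $J^{\epsilon}$ is the Bessel potential of order $\epsilon$ and $\widetilde{\Pi}'$ has symbol $\langle\xi\rangle^{-\epsilon}m(\xi,\eta)$. Then $J^\epsilon b\in L^p$, and it suffices to prove the bound
$$\|\widetilde{\Pi}'_{\beta}(f)\|_{W^{s,t}}\lesssim\|\beta\|_{L^p}\|f\|_{W^{s,q}}$$
for $\beta\in L^p$, which is exactly a statement about a bilinear operator with $x$-independent symbol in $BS^0_{1,0;\pi/4}$ mapping $W^{0,p}\times W^{s,q}\to W^{s,t}$. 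This last estimate follows from the analysis in \cite{pseudo1} (and the reduction-to-elementary-symbols machinery of Theorem \ref{thm1}, which only used $x$-independence in the decisive step). For the case $s=0$, $\epsilon=0$, the claim is simply that a symbol in $BS^0_{1,0;\pi/4}$ gives an operator bounded $L^p\times L^q\to L^t$, which is the known result recorded in Table~1.

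The main obstacle I anticipate is a bookkeeping one rather than a conceptual one: verifying cleanly that the two-piece symbol $m$, after the $j$-decomposition, genuinely falls into the elementary-symbol framework of the form \eqref{eq:form} with uniformly controlled constants, and that the extra $\langle\xi\rangle^{-\epsilon}$ factor (which is \emph{not} smooth across $\xi=0$, though on the support of $m$ one has $|\xi|\gtrsim 1$ away from a compact piece handled separately) does not spoil the symbol estimates. One must be slightly careful that when $s=0$ we genuinely get to take $\epsilon=0$: here there is no need to trade derivatives, and boundedness is immediate from the $L^p\times L^q\to L^t$ bound for the $BS^0_{1,0;\pi/4}$ class with $1/p+1/q=1/t$, noting that the constraint $1/t<3/2$ from Table~1 is automatic since $p,q,t>1$. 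A secondary point to check is that $\widetilde{\Pi}_b(f)$ is well-defined for $b$ only in $W^{\epsilon,p}$ and $f$ only in $W^{s,q}$ (rather than Schwartz), which follows by the usual density argument once the a priori estimate is in hand.
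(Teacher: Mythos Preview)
Your Bessel-potential reduction $\widetilde{\Pi}_b(f)=T_{m'}(J^\epsilon b,f)$ with $m'(\xi,\eta)=\langle\xi\rangle^{-\epsilon}m(\xi,\eta)$ is a legitimate first move, but the claim that the resulting asymmetric bound $T_{m'}:L^p\times W^{s,q}\to W^{s,t}$ ``follows from the analysis in \cite{pseudo1}'' is exactly where the content lies, and it is not justified. The known Sobolev estimates for $BS^0_{1,0;\pi/4}$ (Table~1) are \emph{symmetric}, $W^{s,p}\times W^{s,q}\to W^{s,t}$; mere membership of $m'$ in that class does not yield $L^p\times W^{s,q}\to W^{s,t}$. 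Indeed this asymmetric bound \emph{fails} for generic symbols in the class, and in particular it fails for $m$ itself---that is precisely why the proposition requires $\epsilon>0$ when $s>0$ (high frequencies of $b$ contribute to \emph{low} frequencies of $\widetilde{\Pi}_b(f)$; see the remark after the proposition). So your reduction is circular: after the Bessel trick you are left with essentially the same proposition. The extra factor $\langle\xi\rangle^{-\epsilon}$ in $m'$ is exactly what would make the asymmetric bound provable, but extracting it requires carrying out the Littlewood--Paley analysis explicitly, not a black-box citation. A minor geometric correction: on the support of $m$ one has $|\xi|\ge|\eta|$ (the region in Figure~\ref{fig2}), not ``$|\xi|\approx|\eta|$ up to bounded corrections''; the strip around the diagonals is the complement, relevant instead to Proposition~\ref{prop:produit}.

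For comparison, the paper argues directly. For $s=0$ it factors $\widehat{\Theta}(\xi+\eta)$ out as a bounded convolution on the output and reduces to the Gilbert--Nahmod multiplier $\widehat{\Theta}(\xi-\eta)$, giving $L^p\times L^q\to L^t$ with no $\epsilon$ needed. For $s>0$ it decomposes $\Psi_{2^{-k}}\ast T_\sigma(b,f)$ in the output frequency and observes that on the support one has either $|\eta|\approx 2^k$ (the ``good'' piece, handled by applying the $s=0$ bound to $b$ and $\widetilde\psi_{2^{-k}}\ast f$ and then Fefferman--Stein) or $|\xi|\approx|\eta|\approx 2^l$ with $l\ge k$ (the near-anti-diagonal piece). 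It is the sum over $l\ge k$ of this second piece that consumes the $\epsilon$: Cauchy--Schwarz produces $\bigl(\sum_{l\ge0}2^{2l\epsilon}|\widetilde\psi_{2^{-l}}\ast b|^2\bigr)^{1/2}$ paired with a square function in $f$, and one closes with the bilinear $\ell^2$-valued extension from \cite{GM}. If you want to salvage your approach, you would have to rerun this same analysis for $m'$ and observe that the factor $\langle\xi\rangle^{-\epsilon}\approx 2^{-l\epsilon}$ on the diagonal piece replaces the $W^{\epsilon,p}$ norm of $b$ by $\|J^\epsilon b\|_{L^p}$---which is of course the same computation in disguise.
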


\begin{proof} The new paramultiplication is given by two terms, which can be studied by identical arguments. We only deal with the first term but for simplicity in the notation we still write
$$ \widetilde{\Pi}_f(b)(x)= \int_{\R^2} e^{ix(\xi+\eta)} \widehat{b}(\xi) \widehat{f}(\eta) \widehat{\Theta}(\xi-\eta) \widehat{\Theta}(\xi+\eta) d\xi d\eta. 
$$ 
We note that this function $\widetilde{\Pi}_b(f)$ corresponds to the operator $T_\sigma(b,f)$ associated to the bilinear  symbol
$$ \sigma(\xi,\eta)= \widehat{\Theta}(\xi-\eta) \widehat{\Theta}(\xi+\eta).$$
We need to show that  $T_\sigma$ is continuous from $W^{\epsilon,p}\times W^{s,q}$ to $W^{s,r}$. 

\mb {\bf The case $s=0$.} \\
We compute the Fourier transform of $T_\sigma(b,f)$,
\begin{align*}
\widehat{T_\sigma(b,f)}(\omega) & = \int_{\xi+\eta=\omega} \widehat{b}(\xi) \widehat{f}(\eta) \widehat{\Theta}(\xi-\eta) \widehat{\Theta}(\eta+\xi) d\xi d\eta \\
 & = \widehat{\Theta}(\omega) \int_{\xi+\eta=\omega} \widehat{b}(\xi) \widehat{f}(\eta) \widehat{\Theta}(\xi-\eta)  d\xi d\eta \\
 & = \widehat{\Theta}(\omega) \widehat{T_\tau(b,f)}(\omega),
\end{align*}
where $\tau$ is given by $\tau(\xi,\eta)= \widehat{\Theta}(\xi-\eta)$. So in fact we can write $T_\sigma(b,f)$ as the convolution product between $\Theta$ and $T_\tau(b,f)$. Since the function $\Theta$ in Definition \ref{def:para} is smooth,  the convolution operation by $\Theta$ is bounded on $L^t$. We obtain also
$$ 
\left\| T_\sigma(b,f) \right\|_{L^t} \lesssim \left\|T_\tau(b,f) \right\|_{L^t}.
$$
Now the bilinear operator $T_\tau$ is associated to the symbol $\tau$ which satisfies the H\"ormander multiplier conditions related to the frequency line $\{\xi=\eta\}$. That is,
$$ \left| \partial_\xi^\alpha \partial_\eta^\beta \tau(\xi,\eta) \right| \lesssim \left|\xi-\eta\right|^{-\alpha-\beta}
$$
for all $\alpha$ and $\beta$.
It follows from the work of Gilbert and Nahmod \cite{gina1} that this bilinear operator maps $L^p \times L^q$ to 
$L^t$ and we obtain  the desired result 
$$ \left\| T_\sigma(b,f) \right\|_{L^t} \lesssim \|b\|_{L^p} \|f\|_{L^q}.$$
Note that for the case $s=0$ no regularity on $b$ is really needed.

\mb {\bf The case $s>0$.} \\
Let  $\Phi$ and $\Psi$ be as in (\ref{calderon}) and (\ref{sobolev}).
We study first $\Phi \ast T_\sigma(f,g)$. We have
$$ \widehat{\left[\Phi \ast T_\sigma(b,f)\right]}(\omega)=  \widehat{\Phi}(\omega)\widehat{\Theta}(\omega) \widehat{T_\tau(b,f)}(\omega).$$
The spectral condition over $\Phi$ and $\Theta$ imply that $\omega \approx 1$. So for $\xi$ and $\eta$ (the frequency variables of $b$ and $f$) satisfying $ \xi - \eta \geq 1$ and $\xi+\eta=\omega\approx 1$,  we deduce that either $\eta$ is bounded or $-\xi \approx \eta >>1$. Therefore,  we can find a smooth function $\zeta$ and an other one $\widetilde{\psi}$ (whose spectrum is contained in a corona around $0$) such that
$$ \Phi \ast T_\sigma (b,f) = \Phi \ast T_\sigma (b,\zeta \ast f) + \sum_{l\geq 0} \Phi \ast T_\sigma (\widetilde{\psi}_{2^{-l}} \ast b,\widetilde{\psi}_{2^{-l}}\ast f).$$
Using $0<\epsilon$, we get by the Cauchy-Schwartz inequality
\begin{align}
 \left|\Phi \ast T_\sigma (b,f) \right| & \leq  \left|\Phi \ast T_\sigma (b,\zeta \ast f) \right| + \left(\sum_{l\geq 0} 2^{2\epsilon l} \left|M\left[T_\sigma (\widetilde{\psi}_{2^{-l}} \ast b,\widetilde{\psi}_{2^{-l}}\ast f) \right] \right|^2 \right)^{1/2}.
\label{eq10}
\end{align}
By the same reasoning for an integer $k\geq 1$, if $\xi$ and $\eta$ satisfy $\eta\geq \xi+1$ and $1<\xi+\eta =\omega \approx 2^k$, we deduce that either $\eta \approx 2^k$ or $-\xi \approx \eta >> 2^{k}$. So we can find a smooth function $\widetilde{\psi}$ (for convenience we keep the same notation), whose spectrum is included in a corona around $0$ such that for all integer $k$ large enough
$$ \Psi_{2^{-k}} \ast T_\sigma(b,f) = \Psi_{2^{-k}} \ast T_\sigma (b,\widetilde{\psi}_{2^{-k}} \ast f) + \sum_{l\geq k} \Psi_{2^{-k}} \ast T_\sigma (\widetilde{\psi}_{2^{-l}} \ast b,\widetilde{\psi}_{2^{-l}} \ast f).$$
Using the same $\epsilon$, we get by Minkowski's and Cauchy-Schwartz'  inequalities 
\begin{align}
\left(\sum_{k} 2^{2ks} \left|\Psi_{2^{-k}} \ast T_\sigma(b,f)\right|^2 \right)^{1/2} & \lesssim \left(\sum_{k} 2^{2ks} M\left[ T_\sigma (b,\widetilde{\psi}_{2^{-k}} \ast f)\right] ^2 \right)^{1/2} \nonumber \\
 & \hspace{0cm} + \sum_{l\geq 0} \left( \sum_{k\leq l} 2^{2ks} \left|\Psi_{2^{-k}}\ast T_\sigma (\widetilde{\psi}_{2^{-l}} \ast b,\widetilde{\psi}_{2^{-l}} \ast f) \right|^2 \right)^{1/2} \nonumber \\
 & \lesssim \left(\sum_{k} 2^{2ks} M\left[ T_\sigma (b,\widetilde{\psi}_{2^{-k}} \ast f)\right] ^2 \right)^{1/2} \nonumber \\
 &  \hspace{0cm} + \sum_{l\geq 0} 2^{ls} M\left[T_\sigma (\widetilde{\psi}_{2^{-l}} \ast b,\widetilde{\psi}_{2^{-l}} \ast f) \right] \nonumber \\
 &  \lesssim \left(\sum_{k} 2^{2ks} M\left[ T_\sigma (b,\widetilde{\psi}_{2^{-k}} \ast f)\right] ^2 \right)^{1/2} \nonumber \\
 &   \hspace{0cm} + \left(\sum_{l\geq 0} 2^{2l(s+\epsilon)} \left| M\left[T_\sigma (\widetilde{\psi}_{2^{-l}} \ast b,\widetilde{\psi}_{2^{-l}} \ast f) \right] \right|^2 \right)^{1/2}.   \label{eq20}
\end{align}
From (\ref{eq10}) and (\ref{eq20}),  using the $L^q-L^{t}$ boundedness of $T_\sigma(b,.)$ (proved in the first case), the vector-valued Fefferman-Stein inequality,  and its bilinear version (Theorem~9.1 of \cite{GM} already mentioned), we obtain the desired result:
\begin{align*} 
 \| T_\sigma(b,f)\|_{W^{s,t}} & \lesssim \left\| \left| \Phi \ast T_\sigma(b,f) \right| + \left( \sum_{k \geq 0} 2^{2sk} \left| \Psi_{2^{-k}} \ast T_\sigma(b,f) \right|^2 \right)^{1/2} \right\|_{L^t} \\  
 & \lesssim \|b\|_{L^p} \left\| \left|\zeta \ast f \right| + \left( \sum_{k \geq 0} 2^{2sk} \left| \widetilde{\psi}_{2^{-k}} \ast f \right|^2 \right)^{1/2} \right\|_{L^q} \\
 &  \hspace{1cm} + \left\| \left( \sum_{l \geq 0} 2^{2l\epsilon} \left| \widetilde{\psi}_{2^{-l}} \ast b \right|^2 \right)^{1/2}\right\|_{L^p}  \left\| \left( \sum_{k \geq 0} 2^{2sk} \left| \widetilde{\psi}_{2^{-k}} \ast f \right|^2 \right)^{1/2}\right\|_{L^q}
 \\
 & \lesssim \|b\|_{W^{\epsilon,p}} \|f\|_{W^{s,q}}.
\end{align*}
\end{proof}

\begin{rem} We note that our new bilinear operation needs an extra regularity assumption $b\in W^{\epsilon,p}$ to keep the regularity of the function $f$ (the case $s>0$). This is due to the fact that the high frequencies of $b$ play a role in the high frequency of $\widetilde{\Pi}_b(f)$ (which is natural) but in the low frequencies of $\widetilde{\Pi}_b(f)$ too. This last phenomenom does not appear in the classical paramultiplication operation. This point can be observed in the Figures \ref{fig1} and \ref{fig2}. Let $\omega$ be the frequency variable of the paraproduct. For small $\omega$, say $\omega \simeq 2$, the contributions of $b$ and $f$  correspond to the intersection of the cone  in Figures 1 and 2 and the line $\{\omega=\xi+\eta\}$. In the first case (Figure \ref{fig1}) this intersection is bounded set, whereas in the second case (Figure \ref{fig2}) it is not bounded and contains also high frequencies of $b$.
\end{rem}

\mb We now obtain an improvement on Proposition \ref{prop:error}. 

\begin{prop} \label{prop:produit} Let $t\in(1,\infty)$ and $s\geq 1/t$. If $f\in W^{s,t}$ and $g\in W^{s,t}$, then
 $$ \left\|fg - \widetilde{\Pi}_f(g) - \widetilde{\Pi}_g(f) \right\|_{W^{2s,t}} \lesssim \|f\|_{W^{s,t}} \|g\|_{W^{s,t}}.$$
\end{prop}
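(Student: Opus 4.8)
\textbf{Proof plan for Proposition \ref{prop:produit}.}

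The plan is to decompose the pointwise product $fg$ using the Littlewood-Paley pieces and match these pieces against the new paramultiplications $\widetilde\Pi_f(g)$ and $\widetilde\Pi_g(f)$, so that the error term is supported, in the frequency plane, precisely in a strip of fixed width around the two diagonals $\{\xi=\pm\eta\}$. First I would write $fg = T_{\sigma_1}(f,g) + T_{\sigma_2}(g,f) + T_{\rho}(f,g)$, where $\sigma_1(\xi,\eta)=\widehat\Theta(\xi-\eta)\widehat\Theta(\xi+\eta)$ is (the first half of) the symbol of $\widetilde\Pi_f(g)$, $\sigma_2(\xi,\eta)=\widehat\Theta(\eta-\xi)\widehat\Theta(\xi+\eta)$ is (the first half of) the symbol of $\widetilde\Pi_g(f)$, and the remainder symbol $\rho=1-\sigma_1-\sigma_2 - (\text{the two reflected halves involving }\widehat\Theta(-\xi-\eta))$ is, after a routine check using the defining properties of $\Theta$ in Definition \ref{def:para}, a smooth bounded symbol supported in a region of the form $\{|\xi-\eta|\lesssim 1\} \cup \{|\xi+\eta|\lesssim 1\}$, i.e. in a fixed-width strip around the two main diagonals. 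The second reflected halves of the two paraproduct symbols produce terms supported where $\xi+\eta<0$ and can be absorbed into $\rho$ by the same computation (one simply has to track that $\widehat\Theta(\omega)+\widehat\Theta(-\omega)$ may exceed $1$ on $|\omega|\le 2$, but the overlap is again a bounded strip).

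Next I would estimate $\|T_\rho(f,g)\|_{W^{2s,t}}$. The key observation is the gain of $s$ in regularity: because $\rho$ is supported in the strip near the diagonals, whenever the output frequency $\omega=\xi+\eta$ is of size $2^k$ one of two things happens — either $|\xi+\eta|\lesssim 1$, which forces $|\omega|\lesssim 1$ and contributes only to the low-frequency term, or $|\xi-\eta|\lesssim 1$, which forces $|\xi|\approx|\eta|\approx 2^k$. Thus in the Littlewood–Paley piece $\Psi_{2^{-k}}\ast T_\rho(f,g)$ only the inputs with both $f$ and $g$ at frequency $\approx 2^k$ contribute, and one can freely insert localizing bumps $\overline\psi_{2^{-k}}$ on both arguments. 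This localization converts a $2^{ks}$ weight into the product of a $2^{ks}$ on $f$ and a $2^{ks}$ on $g$; combined with the Hölder relation \eqref{holder} and the $L^p\times L^q\to L^t$ boundedness of the $x$-independent bilinear multiplier (which follows from Gilbert–Nahmod \cite{gina1} as in the proof of Proposition \ref{prop:continuite}), followed by the Fefferman–Stein inequality and its bilinear version (Theorem 9.1 of \cite{GM}), one gets
$$\|T_\rho(f,g)\|_{W^{2s,t}} \lesssim \Big\|\big(\textstyle\sum_k 2^{2ks}|\overline\psi_{2^{-k}}\ast f|^2\big)^{1/2}\Big\|_{L^{2t}}\Big\|\big(\textstyle\sum_k 2^{2ks}|\overline\psi_{2^{-k}}\ast g|^2\big)^{1/2}\Big\|_{L^{2t}} \lesssim \|f\|_{W^{s,2t}}\|g\|_{W^{s,2t}}.$$
Here the exponent $2t$ is forced by the bilinear structure with $p=q=2t$; since $W^{s,2t}\hookrightarrow$ is not directly $W^{s,t}$ one must instead use $s\ge 1/t$: a fractional Leibniz / Sobolev-embedding argument (the diagonal Coifman–Meyer estimate $W^{s,t}\times W^{s,t}\to W^{2s-1/t,t}$ already recalled in Proposition \ref{prop:error}, or Sobolev embedding $W^{s,t}\hookrightarrow L^{2t}$ when $s=1/t$ in one dimension, and interpolation for $s>1/t$) bridges the gap and yields the claimed $W^{2s,t}$ bound. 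Finally I would record that $fg-\widetilde\Pi_f(g)-\widetilde\Pi_g(f)$ differs from $T_\rho(f,g)$ only by finitely many terms of the above localized type, so the same estimate applies.

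The main obstacle I anticipate is precisely the bookkeeping in the last step: matching the two halves of each $\widetilde\Pi$ symbol (the $\widehat\Theta(\xi\pm\eta)$ and the reflected $\widehat\Theta(-\xi-\eta)$ parts) against the decomposition of $1$ and verifying that the leftover symbol $\rho$ is genuinely smooth, bounded, and supported only in the fixed-width strip — the possible double-counting on $\{1\le|\xi\pm\eta|\le 2\}$ must be handled carefully so that no unbounded region (in particular no high frequencies of $f$ or $g$ away from the diagonals) leaks into $\rho$. Once that is clean, the analytic estimates are a direct adaptation of the argument already given for Proposition \ref{prop:continuite}, with the role of the single weight $2^{ks}$ now split evenly between the two arguments, which is exactly why the regularity gain doubles from $2s-1/t$ to $2s$ and why the hypothesis $s\ge 1/t$ (rather than $s>1/t$) suffices.
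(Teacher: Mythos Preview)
Your overall decomposition is correct and matches the paper: the remainder symbol is supported in the union of a strip around $\{\xi=\eta\}$ and a strip around $\{\xi=-\eta\}$, and the anti-diagonal piece contributes only to low output frequencies and is handled by Sobolev embedding exactly as you say. The gap is in your treatment of the diagonal strip.

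You invoke Gilbert--Nahmod to get $L^p\times L^q\to L^t$ with the H\"older relation, which forces $p=q=2t$ and lands you in $\|f\|_{W^{s,2t}}\|g\|_{W^{s,2t}}$. That bound cannot be controlled by $\|f\|_{W^{s,t}}\|g\|_{W^{s,t}}$: the embedding $W^{s,t}\hookrightarrow W^{s,2t}$ is simply false on $\R$, and your proposed patches do not repair this. Sobolev embedding gives $W^{s,t}\hookrightarrow L^{2t}$ (already for $s\ge 1/(2t)$), but you need $W^{s,2t}$, not $L^{2t}$; invoking Proposition~\ref{prop:error} only recovers the classical gain $2s-1/t$, which is precisely what you are trying to beat; and there is no interpolation that manufactures the missing $s$ derivatives in $L^{2t}$ for free.

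What you are missing is that the diagonal-strip piece is not a generic Gilbert--Nahmod multiplier: it has the exact form $\tau_2(\xi,\eta)=m(\xi-\eta)$ with $m$ smooth and \emph{compactly supported}, so that
\[
T_{\tau_2}(f,g)(x)=\int \widehat m(y)\,f(x-y)\,g(x+y)\,dy
\]
with $\widehat m\in L^1\cap L^\infty$. Minkowski's inequality then gives $T_{\tau_2}:L^t\times L^t\to L^t$ for every $t\in[1,\infty]$ --- an off-H\"older boundedness that Gilbert--Nahmod does not provide. With this in hand (and its $l^2$-valued extension via \cite{GM}), the weight $2^{2ks}$ splits as $2^{ks}\cdot 2^{ks}$ and you get $\|f\|_{W^{s,t}}\|g\|_{W^{s,t}}$ directly, with no need for the embedding acrobatics. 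The paper singles this out explicitly as the point where the improved paraproduct differs from the classical one; it is the mechanism behind the jump from $2s-1/t$ to $2s$.
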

 
\begin{rem} As already mentioned, in the classical paramultiplication calculus, the regularity result is true for $s\geq 1/t$ and the gain is only $s-1/t$. 
\end{rem}
 
\begin{proof} 
Let us denote by  $D$ the difference operator 
$$ D(f,g):= fg - \widetilde{\Pi}_f(g) - \widetilde{\Pi}_g(f).$$
It corresponds to the bilinear operator associated to the symbol $\tau$ given by
\begin{align*}
 \tau(\xi,\eta)  := & 1 - \widehat{\Theta}(\eta-\xi) \widehat{\Theta}(\eta+\xi) - \widehat{\Theta}(-\eta+\xi) \widehat{\Theta}(-\eta-\xi) \\
 & -\widehat{\Theta}(\xi-\xi) \widehat{\Theta}(\eta+\xi) - \widehat{\Theta}(\eta-\xi) \widehat{\Theta}(-\eta-\xi).
\end{align*}
This symbol is supported in the complement of the cone drawn in Figure \ref{fig2} and the symetrical one. Consequently, it is supported in two strips (around the two diagonals)
$$ \textrm{supp} (\tau) \subset \left\{(\xi,\eta),\ |\xi-\eta|\leq 3\right\} \cup \left\{(\xi,\eta),\ |\xi+\eta|\leq 3\right\}.$$
We can then reproduce a similar reasonning as used for Proposition \ref{prop:continuite}. 
The symbol $\tau$ can be decomposed in two parts $\tau_1,\tau_2$; the first one supported in $\left\{(\xi,\eta),\ |\xi+\eta|\leq 3\right\}$ and the second one supported in $\left\{(\xi,\eta),\ |\xi-\eta|\leq 3\right\}$. 

The bilinear multiplier associated to $\tau_1$ has only low frequencies,  hence
$$ \|T_{\tau_1}(f,g)\|_{W^{2s,t}} \lesssim \|T_{\tau_1}(f,g)\|_{L^{t}}.$$
Using Proposition \ref{prop:continuite} with exponents $t, p,q\in (1,\infty)$ satisfying (\ref{holder}), it follows that
$$ \|T_{\tau_1}(f,g)\|_{W^{2s,t}} \lesssim \|f\|_{L^p} \|g\|_{L^q} \lesssim \|f\|_{W^{s,t}} \|g\|_{W^{s,t}},$$
where we have used the  Sobolev embeding $W^{s,t} \subset L^p$ since $s\geq 1/t>1/t-1/p$ (and similarly with $q$). 

Concerning the second part $\tau_2$, it is easy to check that, on its support , $1+|\xi+\eta|$, $1+|\xi|$ and $1+|\eta|$ are comparable and in addition
\be{compa} \max \{1+|\xi+\eta|, 1+|\xi|,\ 1+|\eta| \} - \min \{1+|\xi+\eta|, 1+|\xi|,\ 1+|\eta| \} \lesssim 1.\ee
We claim that $T_{\tau_2}$ is bounded from $L^t\times L^t$ into $L^t$. 
Indeed,  the symbol $\tau_2$ is supported around the diagonal $\xi=\eta$ and it takes the  form
$$ \tau_2(\xi,\eta) = m(\xi-\eta),$$
for a smooth function $m$ supported on $[-3,3]$. It follows  that
\be{aze} T_{\tau_2}(f,g)(x)=\int \widehat{m}(y) f(x-y)g(x+y) dy. \ee
Since $m\in \s(\R)$ we have, in particular, that $\widehat{m}\in L^1\cap L^\infty$, and using Minkowski's inequality
we easily deduce that 
$T_{\tau_2}$ is bounded from $L^\infty \times L^\infty$ to $L^\infty$ and from $L^1 \times L^1$ to $L^1$. 
By (complex) bilinear interpolation, we conclude that $T$ is bounded from $L^t \times L^t$ to $L^t$, for 
$1<t<\infty$.

It remains to estimate $T_{\tau_2}$ in the Sobolev space. We let the reader verify that, as in similar previously done computations  
 (and using (\ref{compa})), $T_{\tau_2}$ can be decomposed as 
\be{eq:dec} T_{\tau_2}(f,g) = \sum_{k\geq 0} \Psi_{2^{-k}} T_{\tau_2}\left( \Psi_{2^{-k}}^1 f,\Psi_{2^{-k}}^2 g\right), \ee
for some smooth frequency truncations $\Psi,\Psi^1,\Psi^2$. It follows that
\begin{align*}
\left\| \left(\sum_{k\geq 0} 2^{k4s}  \left| \Psi_{2^{-k}} T_{\tau_2}  \left( \Psi_{2^{-k}}^1 f,\Psi_{2^{-k}}^2 g\right)\right|^2\right)^{1/2} \right\|_{L^t} 
 & \\
& \hspace{-5cm} \lesssim \left\|\left(\sum_{k\geq 0} 2^{k4s} \left| T_{\tau_2}\left( \Psi_{2^{-k}}^1 f, \Psi_{2^{-k}}^2 g\right)\right| ^2\right)^{1/2} \right\|_{L^t} \\
 & \hspace{-5cm} \lesssim \left\|\left(\sum_{k\geq 0} 2^{ks} \left| \Psi_{2^{-k}}^1 f \right|^2\right)^{1/2}\right\|_{L^t} \left\|\left(\sum_{k\geq 0} 2^{k2s} \left| \Psi_{2^{-k}}^2 g \right| ^2\right)^{1/2} \right\|_{L^t} \\
& \hspace{-5cm} \lesssim \| f\|_{W^{s,t}} \| g\|_{W^{s,t}},
\end{align*}
where we have used  the $L^t$  boundedness of the operator $T_{\tau_2}$ and its $l^2$-vector valued extension (given again by Theorem 9.1 of \cite{GM}).
\end{proof}

\begin{rem} 
The previous proof relies on the boundedness from $L^t \times L^t$ to $L^t$ of $T_{\tau_2}$. This property does not hold in  the classical paraproduct situation.

We have given a proof by interpolation, where the specific form of $\tau_2$ plays an important role. We would like to describe now a direct proof of the boundedness for the simpler case $t=2$. The arguments are based on the geometric fact that the symbol $\tau_2$ is supported on a strip around the diagonal with bounded width.

We can use in the $L^2$ case  a partition of frequencies 
given by $\Delta_k$ a smooth  truncation on the interval $[k-4,k+4]$:
$$ \widehat{\Delta_k(f)}(\xi)=\chi(\xi-k)\widehat{f}(\xi),$$
where $\chi$ is a smooth function, supported on $[-4,4]$ and equal to $1$ on $[-3,3]$. Then, by Plancherel's equality, we have
$$ \| T_{\tau_2}(f,g)\|_{L^2}\lesssim \left( \sum_{k\in \Z} \| \Delta_k(T_{\tau_2}(f,g)\|_{L^2}^2\right)^{1/2}.$$
By (\ref{compa}), it follows that with other similar truncation operators $\Delta^1$ and $\Delta^2$,
\begin{align*} 
\| T_{\tau_2}(f,g)\|_{L^2} & \lesssim \left( \sum_{k\in \Z} \| \Delta_k(T_{\tau_2}(\Delta_k^1(f),\Delta_k^2(g))) \|_{L^2}^2\right)^{1/2} \\
 & \lesssim \left( \sum_{k\in \Z} \| {\bf 1}_{|\xi-k|\leq 4} \int \left| \widehat{\Delta_k^1(f)}(\eta) \widehat{\Delta_k^2(g)}(\xi-\eta) \right|d\eta \|_{L^2}^2\right)^{1/2} \\
& \lesssim \left( \sum_{k\in \Z} \left\| \widehat{\Delta_k^1(f)}\right\|_{L^2}^2  \left| \widehat{\Delta_k^2(g)}\right\|_{L^2}^2\right)^{1/2},
\end{align*}
where we have used that each interval $[k-4,k+4]$ has a bounded length. Since the collection $([k-4,k+4])_{k\in\Z}$ is a bounded covering, we can conclude the boundedness of $T_{\tau_2}$ from $L^2\times L^2$ into $L^2$.  (Note that the same argument does not apply in $L^p$.)
\end{rem}

\begin{rem} 
Our new definition of paramultiplication is based on bilinear operators associated to $x$-independent symbols of the class $BS_{1,0;\pi/4}$. We could use the Sobolev boundedness (proved in the first sections of the current paper) in order to define other kind of paramultiplications with an $x$-dependent symbol but we will not carry here such analysis any further.
\end{rem}


\begin{thebibliography}{99}

 \bibitem{ben} \'A.\ B\'enyi, {\it Bilinear pseudodifferential operators with
forbidden symbols on Lipschitz and Besov spaces}, J.\ Math.\ Anal.\ Appl. {\bf 284} (2003), 97-103.

\bibitem{t1bilineaire}
\'A.\ B\'enyi, C.\ Demeter, A.R. Nahmod, C.\ Thiele, R.H.\ Torres, and P.\ Villarroya.
{\em Modulation invariant bilinear T(1)-Theorem}, J.\  Anal.\  Math.\ 109 (2009), 279-352. 

\bibitem{bmnot} \'A.\ B\'enyi, D.\  Maldonado, V.\ Naibo, and R.H.\ Torres, {\it On the H\"ormander classes of bilinear pseudodifferential operators}, J.\  Integral Eq.\ Oper.\ Theory, to appear.

\bibitem{bipseudo}
\'A.\ B\'enyi, A.R.\  Nahmod, and R.H.\  Torres,
{\em Sobolev space estimates and symbolic calculus for bilinear
  pseudodifferential operators},  J.\ Geom.\  Anal. {\bf 16} (2006), 431--453,.

\bibitem{beto1} \'A.\ B\'enyi and R.H.\ Torres, {\it Symbolic calculus and the transposes of
bilinear pseudodifferential operators,} Comm.\ Partial Diff.\ Eq. {\bf 28} (2003), 1161-1181.

\bibitem{beto2} \'A.\ B\'enyi and R.H.\ Torres, {\it Almost orthogonality and a class of bounded
bilinear pseudodifferential operators,} Math.\ Res.\ Lett. {\bf 11} (2004), 1-11.

\bibitem{pseudo1}
F.\ Bernicot, {\em Local estimates and global continuities in Lebesgue spaces for bilinear operators}, Anal and PDE \textbf{1} (2008), 1-27.

\bibitem{pseudo}
F.\ Bernicot, {\em Bilinear Pseudodifferential Calculus}, J.\ Geom.\ Anal. {\bf 20} (2010), no.1, 39--62.

\bibitem{bourdaud}
G.\ Bourdaud,  {\em Une algebre maximale d'operateurs pseudo-differentiels}, Comm.\  Partial Diff.\  Eq. {\bf 13} (1988), 1059--1083 .


\bibitem{bony} J.M.\ Bony, {\it Calcul symbolique et propagation des singularit\'es pour les \'equations aux d\'eriv\'ees partielles non lin\'eaires},
Ann.\ Sci.\ Eco.\ Norm.\ Sup.\ \textbf{14} (1981), 209--246.

\bibitem{CJ} M.\ Christ and J-L.\ Journ\'e, {Polynomial growth estimates
for multilinear singular integral operators}, Acta Math. \textbf{159} (1987),
51--80.

\bibitem{come1} R.R.\ Coifman and Y.\ Meyer, {\it On commutators of singular integrals and bilinear singular integrals,}
Trans.\ Amer.\ Math.\ Soc. {\bf 212} (1975), 315-331.

\bibitem{come2} R.R.\ Coifman and Y.\ Meyer, {\it Commutateurs d'int\'egrales singuli\`ers et op\'erateurs multilin\'eaires,}
Ann.\ Inst.\ Fourier Grenoble {\bf 28} (1978), 177-202.

\bibitem{come3} R.R.\ Coifman and Y.\ Meyer, {\it Au-del\`a des op\'erateurs pseudo-diffe\'rentiels,}
Ast\'erisque 57, Soci\'et\'e Math. de France, 1978.

\bibitem{DJ} G.\ David and J-L.\ Journ\'e, {\it A boundedness
criterion for generalized Calder\'on--Zygmund operators},
Ann.\ of Math. {\bf 120} (1984), 371--397.

\bibitem{FS} C.\ Fefferman and E.M.\ Stein, {\it Some maximal inequalities},
Amer.\ J.\ of Math. {\bf 93} (1971), 107--115.

\bibitem{gina1} J.\ Gilbert and A.\ Nahmod, {\it Boundedness of bilinear operators with non-smooth symbols,}
Math.\ Res.\ Letters {\bf 7} (2000), 767-778.

\bibitem{gina2} J.\ Gilbert and A.\ Nahmod, {\it Bilinear operators with non-smooth symbols. I},
J.\ Fourier\ Anal.\ Appl. {\bf 5} (2001),  435-467.

\bibitem{gina3} J.\ Gilbert and A.\ Nahmod, {\it $L^p$-boundedness of time-frequency paraproducts. II},
J.\ Fourier\ Anal.\ Appl. {\bf 8} (2002), 109-172.

\bibitem{Gra}
L.\ Grafakos, {\em  Classical and Modern Fourier Analysis},
 Pearson Education, 2004.

\bibitem{GM}
L.\ Grafakos and J.M.\ Martell, {\em Extrapolation of Weighted Norm Inequalities for Multivariable Operators and Applications}, J.\ Geom.\ Anal. {\bf 14} (2004), 19--46.

\bibitem{gt1} L.\ Grafakos and R.\ H.\ Torres, {\it Multilinear Calder\'{o}n-Zygmund theory,} Adv.\ in \ Math. {\bf 165} (2002), 124-164.

\bibitem{hormander} L.\ H\"ormander, {\it Pseudodifferential operators of type 1,1},
Comm.\ Partial Diff.\ Eq.\ \textbf{13} (1988) 1085--1111.

\bibitem{hormander2} L.\ H\"ormander, {\it Continuity of pseudodifferential operators
of type 1,1}, Comm.\ Partial Diff.\ Eq.\ \textbf{14} (1989) 231--243.

\bibitem{KS} C.\  Kenig and E.\ M.\ Stein,
{\it Multilinear estimates and fractional integration},
Math.\  Res.\  Lett.\ {\bf  6} (1999), 1--15.

\bibitem{lt1} M.\ Lacey and C.\ Thiele, {\it $L^p$ bounds for the bilinear Hilbert transform, $2<p<\infty$},
Ann.\ of\ Math. {\bf 146} (1997), 693--724.

\bibitem{lt2} M.\ Lacey and C.\ Thiele, {\it On Calder\'on's conjecture},
Ann.\ of\ Math. {\bf 149} (1999), 475--496.

\bibitem{meyer1} Y.\ Meyer,
{\it R\'egularit\'e des solutions des \'equations aux d\'eriv\'ees partielles non lin\'eaires}, S\'em.\ Bourbaki \textbf{22} no. 550, (1979). 

\bibitem{meyer2} Y.\ Meyer, 
{\it Remarques sur un th\'eor\`eme de J. M. Bony.},
[J] Rend.\ Circ.\ Mat.\ Palermo, II. Ser. \textbf{1} (1981), 1--20 (1981).

\bibitem{mtt} C.\ Muscalu, T.\ Tao, and C.\ Thiele, {\it Multilinear operators given by singular multipliers,}
J.\ Amer.\ Math.\ Soc. {\bf 15} (2002), 469-496.

\bibitem{runst} T. Runst, {\it Pseudo-differential operators of the exotic class $S^0_{1,1}$ in spaces of Besov and Triebel-Lizorkin type}, An. Global Anal. Geom. {\bf 3} (1985), 13--28.

\bibitem{stein} E.\ Stein, {\it Harmonic Analysis, Real Variable
Methods, Orthogonality, and Oscillatory Integrals},
Princeton University Press, Princeton, New Jersey,  1993.

\bibitem{To1} R.H. Torres, {\it Continuity properties of
pseudodifferential operators of type (1,1)},
Comm.\ Par.\ Diff.\ Eq.\  \textbf{15} (9) (1990), 1313--1328.

\bibitem{To2} R.H. Torres, {\it Boundedness results for operators with singular kernels on distribution spaces},  Mem. Amer. Math. Soc. \textbf{90} (1991), no. 442.

\bibitem{To3} R.H. Torres, {\it Multilinear singular integral operators with variable coefficients}, Rev. Un. Mat. Argentina  \textbf{50} (2009), 153-170.

\end{thebibliography}
\end{document}